\theoremstyle{plain}
\newtheorem{theorem}{Theorem}
\newtheorem*{theorem*}{Theorem}
\newtheorem{lemma}[theorem]{Lemma}
\newtheorem*{lemma*}{Lemma}
\newtheorem{claim}[theorem]{Claim}
\newtheorem{proposition}[theorem]{Proposition}
\newtheorem{corollary}[theorem]{Corollary}
\newtheorem{definition}[theorem]{Definition}
\theoremstyle{remark}
\newtheorem{remark}[theorem]{Remark}
\newtheorem*{remark*}{Remark}
\DeclarePairedDelimiter\floor{\lfloor}{\rfloor}
\newcommand{\length}[1]{\ell(#1)}
\newcommand{\abs}[1]{|#1|}
\newcommand{\eps}{\varepsilon}
\newcommand{\intersect}{\cap}
\newcommand{\union}{\cup}
\newcommand{\grad}{\nabla}
\newcommand{\reals}{\mathbb{R}}
\newcommand{\vol}{\mathrm{Vol}}
\newcommand{\heat}[3]{p_{#1}(#2,#3)}
\newcommand{\mheat}[4]{p_{#1}^{#4}(#2,#3)}
\newcommand{\thin}[1]{{#1}^{\mathrm{thin}}}
\newcommand{\thick}[1]{{#1}^{\mathrm{thick}}}
\newcommand{\truncated}[1]{\tilde{#1}}
\newcommand{\im}[1]{\mathrm{Im} \left(#1\right)}
\newcommand{\re}[1]{\mathrm{Re} \left(#1\right)}
\newcommand{\inj}[1]{\mathrm{Inj}\left(#1\right)}
\newcommand{\norm}[1]{\|#1\|}
\newcommand{\hyperbolic}{{\mathbb{H}}}
\newcommand{\mdist}[3]{\mathrm{dist}_{#3}\left(#1, #2 \right)}
\newcommand{\dist}[2]{\mathrm{dist}\left(#1, #2 \right)}
\renewcommand{\S}{{S}} 
\newcommand{\mS}{{\tilde{\S}}} 
\newcommand{\curveweight}[1]{w \left(#1 \right)}
\newcommand{\truncinj}[1]{\widehat{\mathrm{Inj}}\left({#1} \right)}
\newcommand{\w}[1]{w\left(#1 \right)}
\newcommand{\aw}{I}
\newcommand{\wdist}[2]{\mathrm{dist}_w\left(#1, #2 \right)}
\newcommand{\wball}[2]{B_w\left(#1, #2 \right)}
\newcommand{\el}[1]{\mathsf{EL}\left( #1 \right)}
\newcommand{\good}[1]{G_{#1}}
\newcommand{\smallball}[2]{A_{#1, #2}}
\newcommand{\tand}{\textrm{ and }}
\title{A sharp lower bound on the small eigenvalues of surfaces}
\author{Renan Gross, Guy Lachman and Asaf Nachmias\footnote{Department of mathematical sciences, Tel Aviv University. \{renang,lachman,asafnach\}@tauex.tau.ac.il}}
\begin{document}

\maketitle

\begin{abstract}
    Let $\S$ be a compact hyperbolic surface of genus $g\geq 2$ and let $\aw(\S) = \frac{1}{\vol(\S)}\int_{\S} \frac{1}{\inj{x}^2 \wedge 1} dx$, where $\inj{x}$ is the injectivity radius at $x$.
    We prove that for any $k\in \{1,\ldots, 2g-3\}$, the $k$-th eigenvalue $\lambda_k$ of the Laplacian satisfies
    \begin{equation*}
      \lambda_k \geq  \frac{c k^2}{\aw(\S) g^2} \, ,
    \end{equation*}    
    where $c>0$ is some universal constant. We use this bound to prove the heat kernel estimate
    \begin{equation*}
        \frac{1}{\vol(\S)} \int_\S \Big| \heat{t}{x}{x} -\frac{1}{\vol(\S)} \Big | ~dx \leq  C \sqrt{ \frac{\aw(\S)}{t}} \qquad \forall t \geq 1 \, ,
    \end{equation*}    
    where $C<\infty$ is some universal constant. These bounds are optimal in the sense that for every $g\geq 2$ there exists a compact hyperbolic surface of genus $g$ satisfying the reverse inequalities with different  constants.
\end{abstract}


\section{Introduction} 
Let $\S$ be a compact hyperbolic surface of genus $g\geq 2$. Due to compactness, the Laplacian has a discrete spectrum $0=\lambda_0 < \lambda_1 \leq \lambda_2 \leq \ldots$ with $\lambda_j \to \infty$ as $j \to \infty$. 
The eigenvalues below $1/4$ are usually called \emph{small}, see \cite[p386]{Huber61} and \cite[Chapter 8]{buser_book}.
A classical construction of Buser \cite{buser_riemann_surfaces_with_eigenvalues_in_0_14} shows that for any $\eps>0$ and any $g\geq 2$ there exists a compact hyperbolic surface $\S$ of genus $g$ with $\lambda_k \leq \eps$ for all $k=1,\ldots, 2g-3$. Otal and Rosas \cite{otal_rosas_eigenvalues_must_be_large} showed that $\lambda_{2g-2} > 1/4$ for any $\S$, hence only the first $2g-3$ eigenvalues can be small (for more refined results, see \cite{mondal_systole_of_closed_hyperbolic_surfaces}). Here we provide a sharp estimate as to how small the small eigenvalues can be in terms of $k$ and $g$ and the quantity 
\begin{equation*}
    \aw(\S) =\frac{1}{\vol(\S)}\int_{\S} \frac{1}{\inj{x}^2\wedge 1} dx \, ,
\end{equation*}
where $\inj{x}$ is the injectivity radius at $x$.

\begin{theorem}\label{thm:main_eigenvalue_lower_bound} There exists a universal constant $c>0$ such that for any compact hyperbolic surface $\S$ of genus $g\geq 2$ and every $k\in \{1, \ldots, 2g-3\}$,   
\begin{equation} \label{eq:main_eigenvalue_lower_bound}
   \lambda_k \geq \frac{c k^2}{\aw(\S) g^2} \, .
\end{equation}
\end{theorem}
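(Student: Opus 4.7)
The plan is to lower bound $\lambda_k$ through the trace of the heat semigroup. If $\lambda_k \leq \eps$, then $\lambda_0, \ldots, \lambda_k$ are all at most $\eps$, so the spectral decomposition of the heat kernel yields
\begin{equation*}
   (k+1)\, e^{-\eps t} \;\leq\; \sum_{j\geq 0} e^{-\lambda_j t} \;=\; \int_{\S} \heat{t}{x}{x}\, dx \qquad \forall\, t > 0.
\end{equation*}
It therefore suffices to upper bound the right-hand side in terms of $\aw(\S)$, $\vol(\S)$, and $t$ alone.

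The key ingredient I aim to establish is the pointwise on-diagonal estimate
\begin{equation*}
  \heat{t}{x}{x} \;\leq\; C\, \min\!\Bigl(\tfrac{1}{t},\; \tfrac{1}{\inj{x}^2 \wedge 1}\Bigr) \qquad \forall\, t \geq 1,\; x \in \S.
\end{equation*}
The factor $C/t$ is the classical 2D Euclidean-like short-time bound, obtainable via Nash's inequality on the universal cover $\hyperbolic$. The factor $C/(\inj{x}^2 \wedge 1)$ encodes the fact that once $t \gtrsim \inj{x}^2 \wedge 1$, Brownian motion on $\S$ has essentially equilibrated within the injectivity ball at $x$, making $\heat{t}{x}{x}$ comparable to the reciprocal of the volume of that ball. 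Granted this pointwise bound, applying $\min(a,b) \leq \sqrt{ab}$ and then Cauchy--Schwarz produces
\begin{equation*}
   \int_{\S} \heat{t}{x}{x}\, dx \;\leq\; \frac{C}{\sqrt{t}} \int_{\S} \frac{dx}{\inj{x} \wedge 1} \;\leq\; \frac{C}{\sqrt{t}}\, \sqrt{\vol(\S)} \cdot \sqrt{\vol(\S)\,\aw(\S)} \;=\; \frac{C\,\vol(\S)\,\sqrt{\aw(\S)}}{\sqrt{t}}.
\end{equation*}

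To conclude, I will choose $t = 1/\eps$ in the first display so that $e^{-\eps t} = 1/e$; combining with the second display gives $k \leq C' \vol(\S) \sqrt{\aw(\S)\,\eps}$, which rearranges via $\vol(\S) = 4\pi(g-1) \leq Cg$ into $\lambda_k \geq c k^2 /(\aw(\S)\, g^2)$. The hard part will be proving the pointwise heat kernel upper bound $\heat{t}{x}{x} \leq C/(\inj{x}^2 \wedge 1)$ with a universal constant, uniformly over $t \geq 1$ and all compact hyperbolic $\S$. The natural route lifts to $\hyperbolic$ and sums the hyperbolic heat kernel over the deck group $\pi_1(\S)$, but controlling this sum inside the thin parts of $\S$---where many deck translates of $x$ accumulate along a short closed geodesic at geometrically decreasing distances---demands a careful analysis of the collar geometry.
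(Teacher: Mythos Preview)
Your proposed pointwise bound cannot hold, and the obstruction is exactly the phenomenon you are trying to control. Observe first that for $t\geq 1$ one always has $\inj{x}^2\wedge 1\leq 1\leq t$, so
\[
\min\Bigl(\tfrac{1}{t},\tfrac{1}{\inj{x}^2\wedge 1}\Bigr)=\tfrac{1}{t},
\]
and your claimed estimate reduces to $\heat{t}{x}{x}\leq C/t$ for all $t\geq 1$. But $\heat{t}{x}{x}\to 1/\vol(\S)>0$ as $t\to\infty$, so this already fails for $t$ large. More to the point, take any $x$ in the thick part ($\inj{x}\asymp 1$) of a surface with a very small first eigenvalue $\lambda_1=\delta$; then for $1\leq t\ll 1/\delta$ one has $\heat{t}{x}{x}\geq \tfrac{1}{\vol(\S)}+\phi_1(x)^2 e^{-\delta t}$, which stays bounded below by a constant depending only on the genus while $C/t\to 0$. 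In other words, the slow decay of $\heat{t}{x}{x}$ is \emph{caused} by small eigenvalues, so a uniform decay bound that ignores the spectrum cannot be used to rule them out. The paper makes this point explicitly in the introduction: no pointwise on-diagonal bound uniform in $x$ can hold without a global isoperimetric inequality, which hyperbolic surfaces with thin collars lack.

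This also means the logical flow you propose is reversed relative to the paper. There the heat-trace estimate (\cref{thm:main_heat_kernel}) is \emph{derived from} the eigenvalue lower bound (\cref{thm:main_eigenvalue_lower_bound}), by splitting $\sum_k e^{-t\lambda_k}$ into $k\leq 2g-3$ (controlled via \eqref{eq:main_eigenvalue_lower_bound}) and $k\geq 2g-2$ (controlled via Otal--Rosas and a fixed-time trace bound). The paper's proof of \cref{thm:main_eigenvalue_lower_bound} bypasses heat kernels entirely: it works with the spectral projection kernel $e_\lambda(x,z)=\sum_{\lambda_k\leq\lambda}\phi_k(x)\phi_k(z)$, sets $f_x=e_\lambda(x,\cdot)/\sqrt{\mu_x(\lambda)}$ where $\mu_x(\lambda)=e_\lambda(x,x)$, and bounds $\mu_x(\lambda)$ by comparing the Dirichlet energy of $f_x$ (which is $\leq\lambda$) against an \emph{extremal length} between a small ball at $x$ and one at a point $y$ where $f_x$ is small. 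The geometric core is \cref{thm:main_extremal_length}, an upper bound on that extremal length in terms of the weighted distance $\wdist{x}{y}$; combined with a covering argument (\cref{lem:integral_over_weighted_balls}, \cref{lem:smallball_has_small_volume}) this yields $N(\lambda)\lesssim g\sqrt{\lambda\,\aw(\S)}$. If you want to pursue a heat-kernel route, the quantity to bound is the \emph{averaged} trace $\tfrac{1}{\vol(\S)}\int_\S\heat{t}{x}{x}\,dx$, not the pointwise diagonal, and doing so for large $t$ without already knowing the small eigenvalues appears to require an argument of comparable depth.
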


We use this estimate to provide a uniform bound on the heat kernel trace on $\S$.

\begin{theorem}\label{thm:main_heat_kernel}For any compact hyperbolic surface $\S$ 
     \begin{equation} \label{eq:heatkernelBound}
         \frac{1}{\vol(\S)} \int_\S \Big| \mheat{t}{x}{x}{\S} -\frac{1}{\vol(\S)} \Big | ~dx \leq C \sqrt{\frac{\aw(\S)}{t}} \qquad \forall t \geq 1 \, . 
     \end{equation}
where $\mheat{t}{x}{y}{\S}$ is the heat kernel of the Laplacian on $\S$ and $C<\infty$ is a universal constant.
\end{theorem}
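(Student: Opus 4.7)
The starting point is the spectral decomposition
\begin{equation*}
\heat{t}{x}{x} - \frac{1}{\vol(\S)} \;=\; \sum_{j \geq 1} e^{-\lambda_j t}\, \phi_j(x)^2,
\end{equation*}
where $\{\phi_j\}_{j \geq 0}$ is an $L^2$-orthonormal basis of Laplacian eigenfunctions with $\phi_0 \equiv 1/\sqrt{\vol(\S)}$. The right-hand side is pointwise non-negative, so the absolute value in \eqref{eq:heatkernelBound} disappears under integration; using $\int_\S \phi_j^2\,dx = 1$ and $\vol(\S) = 4\pi(g-1) \asymp g$ by Gauss--Bonnet, the theorem reduces to proving $\sum_{j \geq 1} e^{-\lambda_j t} \leq C\,\vol(\S)\sqrt{\aw(\S)/t}$ for $t \geq 1$. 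I would split this sum at the Otal--Rosas threshold $k = 2g-2$ and estimate the two pieces separately.

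For the small eigenvalues $1 \leq k \leq 2g-3$, substituting the bound of \Cref{thm:main_eigenvalue_lower_bound} gives a Gaussian tail which I would compare to an integral:
\begin{equation*}
\sum_{k=1}^{2g-3} e^{-\lambda_k t} \;\leq\; \sum_{k \geq 1} \exp\!\Bigl(-\tfrac{ck^2 t}{\aw(\S)g^2}\Bigr) \;\lesssim\; 1 + g\sqrt{\aw(\S)/t}\,,
\end{equation*}
and the extra $+1$ (dominant only when the exponent is large) is absorbed into the target bound since $\aw(\S) \geq 1$ and $\vol(\S) \gtrsim 1$. For the large eigenvalues $k \geq 2g-2$, the Otal--Rosas inequality $\lambda_k \geq 1/4$ together with the elementary factorisation $e^{-\lambda_k t} = e^{-\lambda_k}\,e^{-\lambda_k(t-1)} \leq e^{-\lambda_k}\,e^{-(t-1)/4}$ (valid for $t \geq 1$ and $\lambda_k \geq 1/4$) yields
\begin{equation*}
\sum_{k \geq 2g-2} e^{-\lambda_k t} \;\leq\; e^{-(t-1)/4}\sum_{k \geq 0} e^{-\lambda_k} \;=\; e^{-(t-1)/4}\int_\S \heat{1}{x}{x}\,dx\,.
\end{equation*}

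The remaining work is to bound $\int_\S \heat{1}{x}{x}\,dx$ by $\vol(\S)\sqrt{\aw(\S)}$. I would lift to the universal cover and expand $\heat{1}{x}{x} = \sum_{\gamma \in \Gamma} p_1^{\hyperbolic^2}(\tilde{x},\gamma\tilde{x})$: the iterates of the short geodesic through a Margulis tube around $x$ produce a pointwise estimate of the form $\heat{1}{x}{x} \lesssim 1 + 1/\inj{x}$, and Cauchy--Schwarz then turns $\int 1/\inj{x}\,dx$ into $\vol(\S)\sqrt{\aw(\S)}$. Since $e^{-(t-1)/4} \lesssim 1/\sqrt{t}$ for $t \geq 1$, the high-eigenvalue tail is also at most $C\,\vol(\S)\sqrt{\aw(\S)/t}$, and combining the two pieces and dividing by $\vol(\S)$ yields \eqref{eq:heatkernelBound}. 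The main obstacle is this injectivity-radius-sensitive pointwise bound on $\heat{1}{x}{x}$ near short geodesics; once it is available, the rest is a direct substitution from \Cref{thm:main_eigenvalue_lower_bound} and a textbook Gaussian comparison.
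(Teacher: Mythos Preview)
Your proposal is correct and follows essentially the same route as the paper: spectral trace, split at $k=2g-2$, Gaussian sum via \cref{thm:main_eigenvalue_lower_bound} for the small eigenvalues, Otal--Rosas plus a trace bound at a fixed time for the tail. The only cosmetic difference is in the fixed-time trace estimate: the paper separates $\thick{\S}$ and $\thin{\S}$, uses the lattice-point count on the thick part and the Li--Yau inequality $p_t(x,x)\lesssim 1/\vol(B(x,\sqrt t))\lesssim 1/\inj{x}$ on each collar to get $\int_{\truncated C(\gamma)} p_{1/2}(x,x)\,dx\lesssim \log(1/\ell)$, then applies Cauchy--Schwarz to $\sum_i \log(1/\ell_i)$; you instead assert the equivalent pointwise bound $p_1(x,x)\lesssim 1+1/\inj{x}$ directly from the cover and apply Cauchy--Schwarz to $\int_\S 1/\truncinj{x}\,dx$ --- both yield $\vol(\S)\sqrt{\aw(\S)}$.
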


\begin{remark*}
Throughout this paper we use the term  ``universal constant'' to mean a real number that does not depend on $\S$, $g$, $k$, or any other parameter.    
\end{remark*}

We also show that \cref{thm:main_eigenvalue_lower_bound} and \cref{thm:main_heat_kernel} are optimal in the sense that for any $g\geq 2$ and $I\geq 1$, there exists a compact hyperbolic surface $\S$ of genus $g$ and $\aw(\S)
\geq I$ such that the reversed inequalities of \eqref{eq:main_eigenvalue_lower_bound} and \eqref{eq:heatkernelBound}
 hold with a different constant.

We remark that $\aw(\S)$ has an additional simple geometric interpretation. Let $\gamma_1,\ldots, \gamma_s$ be the set of all simple closed geodesics in $\S$ of length at most $2\sinh^{-1}(1)$ and denote by $\{\ell(\gamma_i)\}_{i=1}^s$ their lengths. Then it is not hard to prove (see \cref{cor:geodesic_integral_correspondence}) that 
        \begin{equation*}
            \aw(\S) \asymp 1 + \frac{1}{\vol(\S)}\sum_{i=1}^s \frac{1}{\ell(\gamma_i)} \, ,
        \end{equation*}
where we write $A \lesssim B$ if there exists a universal constant $C \in(0,\infty)$ such that $A \leq C B$ and $A \asymp B$ if both $A \lesssim B$ and $B \lesssim A$.

\subsection{Related work} A classical paper of Schoen, Wolpert and Yau \cite{schoen_wolpert_yau} provides the bound $\lambda_k \geq \alpha(g) L_k(\S)$, where $\alpha(g)$ is an unknown function of $g$, and $L_k(\S)$ is the minimal possible sum of the lengths of simple closed geodesics in $\S$ that cut $\S$ into $k + 1$ components. Providing an explicit dependence on $g$ in this bound, it is shown in \cite{wu_xue_optimal_lower_bounds_for_first_eigenvalues} that $\lambda_1 \gtrsim L_1(\S)/g^2$ and in \cite{he_wu_second_eigenvalues_of_closed_hyperbolic_surfaces} that $\lambda_2 \gtrsim L_2(\S)/g^2$.  Another general lower bound was obtained in \cite{dubashinskiy2019spectra}, showing that $\lambda_{\lceil \eps g \rceil} \geq c(\inj{\S}) \eps^2$, where  $c(\inj{\S})>0$ is a constant depending only on the injectivity radius of $S$. 

We remark that while both quantities $L_k(\S)$ and $1/\aw(\S)$ measure the amount of bottlenecks in a surface (see \cref{cor:geodesic_integral_correspondence}), they are in general incomparable. Whenever the injectivity radius is uniformly bounded below by a universal constant, \cref{thm:main_eigenvalue_lower_bound} implies that $\lambda_k \gtrsim k^2/g^2$ for $k\in\{1,\ldots,2g-3\}$ improving a result obtained in \cite{WuXue22} regarding such lower bounds in any thick part of the moduli space. 

On-diagonal heat kernel bounds such as \eqref{eq:heatkernelBound} are a well studied topic going back to Nash, Moser and Varopoulos, see  \cite{grigoryan_heat_kernel_upper_bounds, grigoryan_heat_kernel_and_analysis_on_manifolds,Grigoyan1999}. They are classically proved using a suitable isoperimetric inequality and typically yield a uniform estimate in $x$. In this paper, however, one can see that isoperimetric inequalities are irrelevant; for example, one can take $O(1)$ primitive closed geodesic of length $\Theta(1)$, pinch them so that their length is $\Theta(1/g)$ and $\aw(\S)$ will remain of the same order, leaving the bounds of Theorems \ref{thm:main_eigenvalue_lower_bound} and \ref{thm:main_heat_kernel} unaltered. Additionally, one cannot hope for a bound on the on-diagonal heat kernel that is uniform in $x$ without some global isoperimetric inequality. 

\subsubsection{Discrete analogue}\label{sec:discreteAnalogue} 
Laplacian eigenvalues and heat kernel bounds are a well studied topic in the discrete setting as well. Let $G$ be a simple, connected, regular graph with $n$ vertices and consider the \emph{lazy random walk} on it; that is, the walker moves to a uniformly drawn neighbor with probability $1/2$ or stays put otherwise. Let $P$ denote its transition probability matrix. The \emph{graph Laplacian} $\mathcal{L}=I-P$ has eigenvalues $0=\lambda_0 < \lambda_1 \leq \ldots \lambda_{n-1} \leq 1$. The bound 
\begin{equation}\label{eq:graphEigenvalueLowerBound}
    \lambda_k \gtrsim \frac{k^2}{n^2} \, ,
\end{equation}
for all $k\in\{1,\ldots,n\}$ is well known, and follows directly from the estimate (see \cite{CKS87,Coulhon2000,lyons_asymptotic_enumeration_of_spanning_trees, lyons_oveis_gharan_sharp_bounds_via_spectral_embedding}): 
\begin{equation}\label{eq:discreteHeatKernel}
   \frac{1}{n} \Big |\sum_x P^t(x,x) - 1 \Big | \lesssim \frac{1}{\sqrt{t}} \, , 
\end{equation}
for any integer $t\geq 1$ (this is the discrete analogue of \eqref{eq:heatkernelBound}). Indeed, recalling that $\sum_x P^t(x,x) = \sum_{i=0}^{n-1} (1-\lambda_i)^t$ and 
plugging in $t=\lceil \lambda_k^{-1} \rceil$ to the above immediately yields \eqref{eq:graphEigenvalueLowerBound}. In this setup it is also known that the $t^{-1/2}$ upper bound on $|P^t(x,x)-n^{-1}|$ holds uniformly in $x$. Of course we cannot hope for the bound \eqref{eq:graphEigenvalueLowerBound} to be valid for all surfaces, nor can we expect a uniform bound in $x$ for same reason, namely, the possibility of arbitrarily small bottlenecks.

Thus, the more appropriate discrete analogue are finite \emph{weighted} graphs. In this setting, bounds on the spectral measure at a vertex were given by Lyons and Oveis Gharan \cite[][Corollary 4.8]{lyons_oveis_gharan_sharp_bounds_via_spectral_embedding}. Similar bounds were obtained by Lyons and Judge \cite{Judge_Lyons_2019} in the setup of \emph{homogeneous} Riemannian manifolds. We were greatly inspired by the use of the spectral kernel in \cite{lyons_oveis_gharan_sharp_bounds_via_spectral_embedding}; as done there, we too bound the eigenvalues by the Dirichlet energy of a function associated with the spectral kernel, but the bulk of this paper is dedicated to bounding below this energy using a novel geometric argument based on extremal length. 

\subsection{Extremal length} \label{sec:outline}

Extremal length is a geometric method in complex analysis that has had a profound influence on the theory of conformal and quasi-conformal mappings. In this paper we obtain an explicit geometric bound on the extremal length between two  small geodesic discs placed at arbitrary locations in $\S$. 

Given a Borel measurable function $\rho : \S \to [0,\infty)$ and a rectifiable curve $\gamma \subseteq \S$, the $\rho$-length of $\gamma$ is
    \begin{equation*}
        L(\gamma, \rho) = \int_\gamma \rho ~|dz| \, .
\end{equation*}

\begin{definition}[Extremal length] Given a collection $\Gamma$ of rectifiable curves in $\S$ we define the extremal length of $\Gamma$ as 
    \begin{equation*}
        \el{\Gamma} = \sup_\rho  \frac{\inf_{\gamma \in \Gamma} L(\gamma,\rho)^2}{\int_{\S}\rho^2 ~d\mu} \, ,
    \end{equation*}
    where the supremum is taken over all Borel measurable functions $\rho : \S \to [0,\infty)$ and $\mu$ is the Riemannian area measure of $\S$.
\end{definition}

For a general Riemannian manifold $M$, we write $B_M(x,r)$ for the ball in $M$ of radius $r$ around $x \in M$. The injectivity radius of a point $x \in M$, denoted $\inj{x, M}$, is the supremum radius $r$ such that the exponential map $\exp_x(r)$ is a diffeomorphism. We often write $\inj{x}$ when it is clear what $M$ is from context. The injectivity radius of the manifold $M$ itself is $\inj{M} = \inf_{x \in M} \inj{x,M}$. When $\S$ is a Riemannian surface of constant curvature $-1$, i.e., a hyperbolic surface, $\inj{x, \S}$ is the supremum  $r$ so that the ball of radius $r$ around $x$ is isometric to the ball of radius $r$ in the hyperbolic plane $\hyperbolic$.

\begin{definition}[Reciprocal-injectivity weight] \label{def:weight}
    For $x \in \S$, let 
    \begin{equation*}
        \truncinj{x}= \inj{x, \S} \land 1 \, .
    \end{equation*}
    For a rectifiable curve $\gamma \subseteq \S$, let
    \begin{equation*}
        \curveweight{\gamma} = \int_\gamma \truncinj{z}^{-1}\abs{dz} \, .
    \end{equation*}    
    This lets us define the \emph{weighted distance} between $x$ and $y$ as 
    \begin{equation*}
        \wdist{x}{y} = \inf_{\gamma} \curveweight{\gamma} \, ,    
    \end{equation*}
    where the infimum ranges over all rectifiable curves $\gamma$ which connect $x$ to $y$.
\end{definition}

Our main geometric result is the following.

\begin{theorem}\label{thm:main_extremal_length} 
    Let $x,y \in \S$ and $r_x, r_y$ be positive numbers satisfying 
    \begin{eqnarray*}
       d_\S(x,y)\geq r_x+r_y \, ,\quad r_x \leq  \truncinj{x}/2 \, , \quad r_y \leq \truncinj{y}/2 \, . 
    \end{eqnarray*}        
    Let $\Gamma$ be the set of all curves in  $\S \backslash \left( B_\S(x, r_x) \union B_\S(y,r_y) \right)$ from $\partial B_\S(x,r_x)$ to $\partial B_\S(y,r_y)$. Then 
    \begin{equation}\label{eq:main_extremal_length}
        \el{\Gamma} \lesssim \wdist{x}{y} + \log\left(\frac{\truncinj{x}}{r_x} \right) + \log\left(\frac{\truncinj{y}}{r_y}  \right) \, .
    \end{equation}            
\end{theorem}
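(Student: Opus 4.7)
The plan is to establish the upper bound by the length-area method: for any Borel $\rho:\S\to[0,\infty)$ I will exhibit a curve $\gamma\in\Gamma$ with $L(\gamma,\rho)^2\lesssim M\int_\S\rho^2$, where $M$ denotes the right-hand side of \eqref{eq:main_extremal_length}. Write $R_x=\truncinj{x}/2$, $R_y=\truncinj{y}/2$, and decompose the available domain into $A_x=B(x,R_x)\setminus B(x,r_x)$, $A_y$ defined analogously, and $\Omega_M=\S\setminus(B(x,R_x)\cup B(y,R_y))$; after a harmless shrinkage I may assume the two intermediate balls are disjoint. Fix $\epsilon>0$ and let $\gamma^*$ be a curve from $x$ to $y$ with $\w{\gamma^*}\le\wdist{x}{y}+\epsilon$. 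Replacing $\gamma^*$ inside $B(x,R_x)$ and $B(y,R_y)$ by radial geodesics changes $\w{\gamma^*}$ by at most an absolute constant (since $\truncinj{}\ge R_x$ throughout $B(x,R_x)$), which is absorbed by $\log(\truncinj{x}/r_x)\ge\log 2$; I may thus assume $\gamma^*$ exits $\partial B(x,R_x)$ radially at arclength $s_x$ and analogously at $s_y$.

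The candidate family $\{\gamma_t\}_{t\in[-1,1]}$ has each $\gamma_t$ a concatenation of three pieces. The middle piece is built via Fermi coordinates: reparametrize $\gamma^*|_{[s_x,s_y]}$ by the weighted length $d\tau=ds/\truncinj{\gamma^*(s)}$ so that $\tau$ ranges over an interval of length at most $\w{\gamma^*}$, and set $\gamma^M_t(\tau)=\exp_{\gamma^*(s(\tau))}\!\bigl(c\,\truncinj{\gamma^*(s(\tau))}\,t\,\nu(\tau)\bigr)$, where $\nu$ is the unit normal to $\gamma^*$ and $c>0$ is a small absolute constant; the 1-Lipschitz property of $z\mapsto\truncinj{z}$ together with the smallness of $c$ guarantees the Fermi map $\Phi(t,\tau)=\gamma^M_t(\tau)$ is an embedding into $\Omega_M$. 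Let $p(t)=\gamma^M_t(\tau_x)$ and $q(t)=\gamma^M_t(\tau_y)$, which trace short arcs close to $\partial B(x,R_x)$ and $\partial B(y,R_y)$. I complete $\gamma_t$ by prepending a radial ray in $A_x$ from $\partial B(x,r_x)$ ending at $p(t)$, and appending a similar ray in $A_y$ starting at $q(t)$.

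The three individual estimates come from Cauchy-Schwarz in appropriate coordinates. In polar coordinates around $x$ the area element is $\sinh(r)\,dr\,d\theta\asymp r\,dr\,d\theta$ for $r\le 1/2$, giving $\int_0^{2\pi}L_{\mathrm{rad}}(\theta)^2\,d\theta\lesssim\log(R_x/r_x)\int_{A_x}\rho^2$ for the family of radial rays. Because $\gamma^*$ exits radially, the normal $\nu(\tau_x)$ is tangent to $\partial B(x,R_x)$, so the angular coordinate $\theta_x(t)$ of $p(t)$ around $x$ satisfies $|\theta_x'(t)|\gtrsim c\,\truncinj{}/\sinh(R_x)\gtrsim c$; a change of variables gives $\int_{-1}^1 L_1(t)^2\,dt\lesssim\log(\truncinj{x}/r_x)\int_{A_x}\rho^2$, and analogously for $L_3$. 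For the middle piece, a direct computation of the Fermi Jacobian yields $|\det D\Phi|\asymp c\,\truncinj{}^2$ and $|\partial_\tau\Phi|\asymp\truncinj{}$, so Cauchy-Schwarz in $\tau$ followed by integration in $t$ yields $\int_{-1}^1 L_2(t)^2\,dt\lesssim\wdist{x}{y}\int_{\Omega_M}\rho^2$. Summing via $L(\gamma_t,\rho)^2\le 3(L_1^2+L_2^2+L_3^2)$ and disjointness of $A_x,\Omega_M,A_y$ gives $\int_{-1}^1 L(\gamma_t,\rho)^2\,dt\lesssim M\int_\S\rho^2$; the minimum over $t$ then furnishes the required short curve.

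The main obstacle is ensuring the coupling $t\mapsto\theta_x(t)$ has derivative bounded below, which is needed because endpoints of the radial ray and middle curve must match; this forces $\gamma^*$ to cross $\partial B(x,R_x)$ transversally, which is why I first replace $\gamma^*$ inside the ball by a radial geodesic at additive cost $O(1)$. A secondary technical point is verifying that the Fermi tube $\Phi([-1,1]\times[\tau_x,\tau_y])$ is embedded, which follows from the 1-Lipschitz property of $\truncinj{}$ together with $|c\,\truncinj{}\,t|\le c\,\truncinj{}$ and smallness of $c$; and the edge case $B(x,R_x)\cap B(y,R_y)\neq\emptyset$ is handled by shrinking the intermediate radii so that $R_x+R_y\le d_\S(x,y)$.
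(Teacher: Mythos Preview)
Your overall strategy---build a one-parameter family of curves in a tube around a reference curve and apply length-area---is the same as the paper's, but there is a genuine gap in the middle piece. The weight-minimizing curve $\gamma^*$ is in general neither a hyperbolic geodesic nor even $C^1$: it is a minimizer for the length functional of the (non-smooth) conformal metric $\truncinj{}^{-1}\,|dz|$, and moreover your modification of replacing its initial and final segments by radial geodesics introduces corners at $\partial B(x,R_x)$ and $\partial B(y,R_y)$. Consequently the unit normal $\nu$ along $\gamma^*$ need not exist everywhere, and even where it does the Fermi map $\Phi(t,\tau)=\exp_{\gamma^*}\bigl(c\,\truncinj{}\,t\,\nu\bigr)$ can fail to be an embedding: the $1$-Lipschitz property of $\truncinj{}$ only controls the \emph{width} of the tube, not local self-intersections caused by high curvature of $\gamma^*$. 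For the same reason the Jacobian claim $|\det D\Phi|\asymp c\,\truncinj{}^2$ is unjustified---in Fermi coordinates about a non-geodesic curve the area element carries a factor of the form $1-\kappa r+O(r^2)$ where $\kappa$ is the geodesic curvature of $\gamma^*$, and you have no bound on $\kappa$.

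This is precisely the obstacle the paper isolates and works around. Rather than attempting to control the regularity of $\gamma^*$ directly, the paper first passes to a smooth conformal metric $\tilde g=f\cdot g$ with $f\asymp\truncinj{}^{-1}$, chosen so that the resulting surface $\tilde S$ has uniformly bounded curvature and injectivity radius $\gtrsim 1$ (\cref{prop:change_of_metric}). In $\tilde S$ the weight-minimizing curve becomes an honest minimal geodesic, Fermi coordinates are globally well-defined in a tube of constant width (\cref{prop:fermi_is_possible}), and the Jacobian bounds follow from the curvature bounds via standard Jacobi-field comparison. The fan-in/fan-out near $x$ and $y$ is then handled within the same Fermi chart in $\tilde S$ rather than by separate polar-coordinate arguments. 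Since extremal length is conformally invariant, the bound transfers back to $S$. Your annular estimates near $x$ and $y$ are fine, but to salvage the middle piece you would need either to prove a priori curvature bounds for weight-minimizers in $S$, or to adopt the paper's conformal-change device.
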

\begin{remark}\label{rmk:weightedgraph} Let us provide a rough, yet guiding, intuition. It is well known that any compact hyperbolic surface $\S$ has  a pair of pants decomposition in which all simple closed geodesics of length at most $2\sinh^{-1}(1)$ are boundary geodesics where two pairs of pants were glued. This is the content of the so-called collar lemma (\cref{lem:collar_lemma}). If a path $\gamma$ traverses from one side of a narrow collar of width $\ell$ to the other, then the contribution of this part of the curve to its weight $\w{\gamma}$ is proportional to $1/\ell$. Thus, $\w{\gamma}$ is the sum of the reciprocal lengths of the narrow collars through which it passes, up to a multiplicative constant.

We may think of this pair of pants decomposition of $\S$ as defining a $3$-regular graph in which the pants are vertices and edges are formed between two pairs of pants if they are glued at one of their three boundary geodesics. Assign to each edge a weight $c(e)$ equal to the length of the corresponding boundary geodesic. The weighted distance between two vertices $x,y$ in this analogous discrete setting is the minimal sum of reciprocals edge weights $1/c(e)$ over all paths from $x$ to $y$. 

If the weights $\{c(e)\}$  are thought of as  electric conductances assigned to the edges, then by the parallel law together with Rayleigh's monotonicity law, the effective electric resistance between $x$ and $y$ is at most the sum of edge resistances, i.e. $1/c(e)$, over the edges of any path from $x$ to $y$. So the weighted distance between two vertices $x$ and $y$ gives an upper bound on the effective electric resistance (this bound is far from sharp in many cases). Extremal length is the continuous analogue of effective electric resistance and so \cref{thm:main_extremal_length} is a very rough analogue of the aforementioned bound. 
\end{remark}

\subsection*{Organization}
The rest of this paper is organized as follows. In the next section, we provide the necessary geometric background. In \cref{sec:extremal_length_proof} we prove \cref{thm:main_extremal_length} and in 
\cref{sec:lower_bound_proof} we use it to prove \cref{thm:main_eigenvalue_lower_bound}. 
\cref{thm:main_heat_kernel} is proved in \cref{sec:heat_kernel_proof} and optimality of \cref{thm:main_eigenvalue_lower_bound} and \cref{thm:main_heat_kernel} is shown in \cref{sec:sharpness}.

\section{Geometric preliminaries}
A hyperbolic surface is a connected orientable Riemannian surface with constant curvature $-1$. An important geometric characterization of compact hyperbolic surfaces is the so-called collar lemma, which describes the parts of the surface with small injectivity radius. The following is a combination of Theorem 4.1.1 and Theorem 4.1.6 from \cite{buser_book}.
\begin{lemma}[Collar lemma] \label{lem:collar_lemma}
    Let $\gamma_1, \ldots, \gamma_s$ be the set of all simple closed geodesics of length at most $2\sinh^{-1}(1)$ on a compact hyperbolic surface $\S$. Let $W(\gamma_i) =\sinh^{-1}\left(\frac{1}{\sinh\left(\frac{1}{2} \length{\gamma_i}\right)}\right)$ and $C(\gamma_i) = \{x\in \S \mid \mdist{x}{\gamma_i}{\S} \leq W(\gamma_i)\}$. Then 
    \begin{itemize}        
        \item The sets $C(\gamma_i)_{i=1}^s$ are pairwise disjoint.
        \item $\inj{x} \geq \sinh^{-1}(1)$ for all $x \notin \union_i C(\gamma_i)$.
        \item Each $C(\gamma_i)$ is isometric to the cylinder $\left[-W(\gamma_i), W(\gamma_i)\right] \times \left(\reals / [x \mapsto x + 1] \right) $
        with the Riemannian metric $ds^2 = d\rho^2 + \length{\gamma_i}^2\cosh(\rho)^2 d\theta^2$.
        \item If $x \in C(\gamma_i)$ is such that $\inj{x} \leq \sinh^{-1}(1)$ and $d = \mdist{x}{\partial C(\gamma_i)}{\S}$, then
        \begin{equation*}
            \sinh(\inj{x}) = \cosh\left(\frac{1}{2}\length{\gamma_i}\right)\cosh(d) - \sinh(d).
        \end{equation*}
    \end{itemize}
\end{lemma}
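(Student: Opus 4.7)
The plan is to establish parts 3 and 4 by a direct computation in Fermi coordinates in the universal cover $\hyperbolic$, and then to deduce parts 1 and 2 from the resulting explicit formula together with standard hyperbolic trigonometry. Throughout, I identify $\S = \hyperbolic/\pi_1(\S)$, and for each simple closed geodesic $\gamma_i$ choose a lift $\tilde\gamma_i \subset \hyperbolic$ preserved by a hyperbolic translation $T_i$ of translation length $\ell_i := \length{\gamma_i}$.

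For part 3, I introduce Fermi coordinates $(\rho, s)$ on a tubular neighborhood of $\tilde\gamma_i$, where $\rho$ is the signed hyperbolic distance to $\tilde\gamma_i$ and $s$ is arc length along $\tilde\gamma_i$. A short computation gives the hyperbolic metric as $d\rho^2 + \cosh^2(\rho)\, ds^2$; quotienting by $\langle T_i \rangle$ turns the $s$-line into a circle of circumference $\ell_i$, and a linear reparametrization to an angular coordinate yields exactly the stated cylinder metric. For part 4, I apply the standard Fermi-coordinate distance formula to find that the orbit point $T_i x$ satisfies $\cosh(\mathrm{dist}_{\hyperbolic}(x,T_i x)) = \cosh^2(\rho)\cosh(\ell_i) - \sinh^2(\rho)$. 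Using $\cosh(2r) = 1 + 2\sinh^2(r)$, $\cosh^2-\sinh^2 =1$, and $\cosh(\ell_i)-1 = 2\sinh^2(\ell_i/2)$, this simplifies to $\sinh(\inj{x}) = \sinh(\ell_i/2)\cosh(\rho)$, assuming $T_i$ is the minimizer in the injectivity radius calculation (which part 1 below justifies). Substituting $\rho = W(\gamma_i) - d$ and checking $\cosh(W(\gamma_i)) = \coth(\ell_i/2)$ and $\sinh(W(\gamma_i)) = 1/\sinh(\ell_i/2)$ directly from the definition of $W$ produces the advertised identity $\sinh(\inj{x}) = \cosh(\ell_i/2)\cosh(d) - \sinh(d)$.

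For the disjointness statement of part 1, the classical argument builds a right-angled hexagon from the common perpendicular of two disjoint simple closed geodesics $\gamma_i, \gamma_j$ together with arcs of the geodesics themselves, and applies the hyperbolic hexagon identities to bound the length of the perpendicular below by $W(\gamma_i)+W(\gamma_j)$, forcing the two collars to be disjoint. The case $i=j$ requires an analogous but separate argument showing that the two ``sides'' of a single collar cannot be identified under the fundamental group. Part 2 follows from parts 1, 3, 4: if $\inj{x} < \sinh^{-1}(1)$ for some $x$, there is a nontrivial loop at $x$ of length $<2\sinh^{-1}(1)$; tightening this loop yields a simple closed geodesic $\gamma_j$ in our list, and inverting the formula of part 4 places $x$ inside $C(\gamma_j)$, contradicting $x \notin \cup_i C(\gamma_i)$.

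The main obstacle I anticipate is the disjointness in part 1, which genuinely uses the global hyperbolic structure of $\S$ and not merely the local Fermi-coordinate picture. A clean execution proceeds via the right-angled hexagon (equivalently, Fenchel--Nielsen) identities applied to the configuration built from $\gamma_i, \gamma_j$ and their common perpendicular; the resulting trigonometric computation is the technical heart of the Keen--Buser collar lemma, and the reason this result is usually imported wholesale from \cite{buser_book} rather than reproved.
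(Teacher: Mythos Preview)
The paper does not prove this lemma at all: it simply cites it as ``a combination of Theorem 4.1.1 and Theorem 4.1.6 from \cite{buser_book}'' and moves on. Your proposal, by contrast, sketches an actual proof along the lines of Buser's original argument (Fermi coordinates for the cylinder structure, the orbit-distance computation for the injectivity formula, and the right-angled hexagon identities for disjointness). So there is no meaningful comparison of approaches to make; you are reconstructing the referenced proof while the paper treats the result as a black box.

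Your sketch is largely sound. The computation for part~4 is correct: from $\sinh(\inj{x}) = \sinh(\ell_i/2)\cosh(\rho)$ and $\rho = W(\gamma_i)-d$, the addition formula together with $\sinh W = 1/\sinh(\ell_i/2)$ and $\cosh W = \coth(\ell_i/2)$ gives exactly the stated identity. Two places deserve a bit more care if you intend this as a self-contained proof rather than a pointer to Buser. First, the claim that $T_i$ realizes the minimum in the injectivity-radius formula inside $C(\gamma_i)$ is not literally part~1 (disjointness of different collars); one also needs that the collar embeds, i.e.\ that no other element of $\pi_1(\S)$ moves a point of the half-tube closer than $T_i$ does. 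Second, in part~2 the loop of length $<2\sinh^{-1}(1)$ need not tighten to a \emph{simple} closed geodesic a priori, and one must argue that the short homotopy class forces $x$ to lie within distance $W(\gamma_j)$ of $\gamma_j$. Both points are handled in Buser's Theorems 4.1.1 and 4.1.6, which is precisely why the paper imports the result wholesale.
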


The following proposition provides a formula and an estimate of the injectivity radius of points in the collars in terms of their cylindrical coordinates.
\begin{proposition}[Injectivity radius estimate]\label{prop:injradius_estimate}
Let $T$ be an infinite hyperbolic collar whose shortest closed geodesic $\gamma$ has length $\ell$. Denote by $r(\rho,\ell)$ the injectivity radius at a point of distance $\rho$ from $\gamma$. Then
\begin{equation*}
    r(\rho, \ell) = \frac{1}{2}\cosh^{-1}\left(1 + \left(\cosh(\ell) -1 \right)\cosh(\rho)^2 \right) \, .
\end{equation*}
In particular, 
\begin{equation*}
    r(\rho, \ell) \asymp \ell \cosh(\rho) \, 
\end{equation*}
for all $\ell \leq 2\sinh^{-1}(1)$ and all $\rho \leq W(\gamma)$ where $W(\gamma)$ is defined in \cref{lem:collar_lemma}.
\end{proposition}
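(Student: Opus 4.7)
The plan is to identify the infinite collar $T$ with a quotient of the hyperbolic plane by a single hyperbolic isometry, read off $r(\rho,\ell)$ from the standard displacement formula, and then estimate the resulting expression on the relevant bounded range.

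First, I would realize $T$ as $\mathbb{H}^2/\langle \phi \rangle$, where $\phi$ is a hyperbolic isometry with axis $A$ and translation length $\ell$; the core geodesic $\gamma$ is the image of $A$ and a point $p \in T$ at distance $\rho$ from $\gamma$ lifts to some $\tilde p \in \mathbb{H}^2$ with $\mathrm{dist}(\tilde p, A) = \rho$. Since the deck group is the infinite cyclic group $\langle \phi \rangle$ acting freely, the injectivity radius at $p$ equals
\[
r(\rho,\ell) \;=\; \tfrac{1}{2}\min_{k \neq 0} \mathrm{dist}(\tilde p, \phi^k \tilde p).
\]

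Second, I would invoke the classical displacement identity for a hyperbolic isometry away from its axis,
\[
\sinh\!\bigl(\tfrac{1}{2}\mathrm{dist}(\tilde p, \phi^k \tilde p)\bigr) \;=\; \sinh(|k|\ell/2)\,\cosh(\rho),
\]
which one can verify directly in the upper half-plane by taking $\phi(z)=e^{\ell}z$ and computing. Since the right-hand side is strictly increasing in $|k|$, the minimum over $k\neq 0$ is attained at $|k|=1$, giving $r(\rho,\ell) = \sinh^{-1}(\sinh(\ell/2)\cosh(\rho))$. Applying $\cosh(2x)=1+2\sinh^2(x)$ with $x=r(\rho,\ell)$ and using $2\sinh^2(\ell/2)=\cosh(\ell)-1$ rewrites this in exactly the closed form stated.

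Third, for the asymptotic estimate I would bound the argument of $\sinh^{-1}$. From the definition $W(\gamma) = \sinh^{-1}(1/\sinh(\ell/2))$ one computes
\[
\sinh(\ell/2)\cosh(W(\gamma)) \;=\; \sinh(\ell/2)\sqrt{1+1/\sinh^2(\ell/2)} \;=\; \cosh(\ell/2),
\]
so for $\rho \leq W(\gamma)$ and $\ell \leq 2\sinh^{-1}(1)$ the argument $\sinh(\ell/2)\cosh(\rho)$ lies in $[0,\cosh(\sinh^{-1}(1))] = [0,\sqrt{2}]$. On this bounded interval $\sinh^{-1}(u)\asymp u$, and also $\sinh(\ell/2)\asymp \ell$ because $\ell$ is bounded, which together yield $r(\rho,\ell) \asymp \ell\cosh(\rho)$.

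There is no real obstacle here beyond knowing (or rederiving) the displacement identity: once it is in hand, the exact formula is one hyperbolic identity away, and the asymptotic follows from the fact that the argument of $\sinh^{-1}$ is bounded on the relevant range.
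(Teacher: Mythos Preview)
Your proposal is correct and follows essentially the same approach as the paper: both realize the collar as $\hyperbolic/\langle z\mapsto e^\ell z\rangle$ and compute the half-distance from a lift to its nearest translate. The paper carries out the distance computation directly via the formula $\cosh(d_\hyperbolic(z,w))=1+\tfrac{|z-w|^2}{2\,\im{z}\im{w}}$, whereas you package this as the displacement identity $\sinh(\tfrac12 d(\tilde p,\phi^k\tilde p))=\sinh(|k|\ell/2)\cosh(\rho)$; your explicit minimization over $k\neq 0$ and your $\sinh^{-1}$-based asymptotic (bounding the argument by $\cosh(\ell/2)\leq\sqrt{2}$) are slightly cleaner than the paper's Taylor-expansion route, but the substance is the same.
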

\begin{proof}
In the upper-half plane model, an infinite collar can be realized as the quotient of $\hyperbolic$ by the relation $x \sim \lambda x$, where $\lambda \in \reals$ is chosen such that the hyperbolic distance satisfies $d_{\hyperbolic}(i,\lambda i) = \ell$. Since $i$ and $\lambda i$ are on the imaginary axis, a simple integration gives $\log(\lambda) = \ell$. The injectivity radius of a  point $x$ is then given by $\frac{1}{2}d_{\hyperbolic}(x,e^\ell x)$. To calculate this quantity, let $x$ be a point in the upper-half plane whose hyperbolic distance from the imaginary axis is $\rho$. Let $r=|x|$, so that $d_{\hyperbolic}(x,ir)=\rho$. The distance between $z,w\in\hyperbolic$ is given by
\begin{equation*}
    \cosh(d_{\hyperbolic}(z,w)) = 1 + \frac{|z-w|^2}{2 \im{z} \im{w}} \, .
\end{equation*} 
This lets us find the imaginary part of $x$. Using the distance formula, we have
\begin{align*}
    \cosh(\rho) -1 &=\cosh(d_{\hyperbolic}(x,ir))-1\\
    &=\frac{|x-ir|^2}{2\im{x}r}=\frac{\re{x}^2+(\im{x}-r)^2}{2\im{x}r}=\frac{2r^2-2\im{x}r}{2\im{x}r}=\frac{r}{\im{x}}-1 \, ,
\end{align*}
and so $\im{x}=r/\cosh(\rho)$. Hence
\begin{equation*}
    \cosh(d_{\hyperbolic}(x,e^\ell x))=1 + \frac{|x|^2\left(e^\ell-1\right)^2}{2e^\ell \im{x}^2} = 1 + \frac{\left(e^\ell-1\right)^2 \cosh(\rho)^2}{2e^\ell},
\end{equation*}
yielding
\begin{equation}\label{eq:distance_in_collar}
    d_{\hyperbolic}(x,e^\ell x)= \cosh^{-1}\left(1 + \left(\cosh(\ell) -1 \right)\cosh(\rho)^2 \right) \, .
\end{equation}
The asymptotic result then follows from the asymptotic expansions $\cosh^{-1}(1+x) = \sqrt{x} + O(x^{3/2})$ and $\cosh(\ell)-1 = \ell^2/2 + O(\ell^4)$, and observing that $W(\gamma)\leq \log(1/\ell) + C$ for some universal constant $C$ as long as $\ell \leq 2$. 
\end{proof}

\begin{corollary}\label{cor:geodesic_integral_correspondence}
        Let $\S$ be a compact hyperbolic surface, and let $\gamma_1, \ldots, \gamma_s$ be the set of all simple closed geodesics of length $\leq 2\sinh^{-1}(1)$ in $\S$. Then 
        \begin{equation*}
            \aw(\S) \asymp 1 + \frac{1}{\vol(\S)}\sum_{i=1}^s \frac{1}{\ell(\gamma_i)} \, .
        \end{equation*}
    \end{corollary}
\begin{proof} Consider a pair of pants decomposition which uses $\gamma_1, \ldots, \gamma_s$ as boundary geodesics. Using the metric $ds^2 = d\rho^2 + \ell(\gamma_i)^2\cosh(\rho)^2d\theta^2$ from \cref{lem:collar_lemma}, the volume of the intersection of a pair of pants with a collar $C(\gamma_i)$ is given by 
\begin{equation*}
\int_0^1 \int_0^{W(\gamma_i)} \ell(\gamma_i) \cosh(\rho)d\theta d\rho = \ell(\gamma_i) \sinh(W(\gamma_i)) = \frac{\ell(\gamma_i)}{\sinh\left(\frac{1}{2}\ell(\gamma_i)\right)} \leq 2 .
\end{equation*}
Each pair of pants has volume $2\pi > 6$ and intersects at most three collars $C(\gamma_i)$, hence
    \begin{equation*}
        \int_{\S \setminus \union_i C(\gamma_i)} \frac{1}{\inj{x}^2\land 1}~dx \asymp \vol(\S) \, .
    \end{equation*}
    For points $x \in \union_i C(\gamma_i)$, the integrand is proportional to $1/\inj{x}^2$. For every collar $C(\gamma_i)$, by \cref{prop:injradius_estimate} we therefore have
    \begin{align*}
        \int_{C(\gamma_i)} \frac{1}{\inj{x}^2\land 1}~dx &\asymp \int_{C(\gamma_i)} \frac{1}{\inj{x}^2}~dx \\
        &\asymp \int_0^{1} \int_0^\infty \frac{1}{\ell(\gamma_i)^2 \cosh(\rho)^2} \ell(\gamma_i)\cosh(\rho) ~d\rho d\theta 
        \asymp \frac{1}{\ell(\gamma_i)} \, ,
    \end{align*}
    and the result follows.
\end{proof}

\begin{lemma}\label{lem:weight_is_subexponential}
    Let $\S$ be a compact hyperbolic surface, and let $x,y\in \S$ with $\mdist{x}{y}{\S} \leq 1$. Then
    \begin{equation*}
        \truncinj{x} \asymp \truncinj{y} \, .
    \end{equation*}
\end{lemma}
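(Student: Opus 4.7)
The plan is a case analysis based on where $x$ and $y$ sit relative to the thin collars of \cref{lem:collar_lemma}. Inside a collar $C(\gamma_i)$ around a short geodesic of length $\ell_i$, \cref{prop:injradius_estimate} gives the clean asymptotic $\inj{z} \asymp \ell_i \cosh(\rho_z)$, where $\rho_z = \mathrm{dist}(z, \gamma_i)$, while outside all collars the collar lemma asserts $\inj{z} \geq \sinh^{-1}(1)$. This reduces the desired comparison $\truncinj{x} \asymp \truncinj{y}$ to a comparison of $\cosh(\rho_x)$ and $\cosh(\rho_y)$.

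First I would dispose of the \emph{thick} case: if neither $x$ nor $y$ lies in any collar, then by the collar lemma both injectivity radii are bounded below by $\sinh^{-1}(1)$, so $\truncinj{x} \asymp \truncinj{y} \asymp 1$. I may therefore assume without loss of generality that $x$ lies in some collar $C(\gamma_i)$, and write $\ell_i = \length{\gamma_i}$, $\rho_x = \mathrm{dist}(x,\gamma_i)$.

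Suppose first that $y$ also lies in $C(\gamma_i)$. Since $z \mapsto \mathrm{dist}(z,\gamma_i)$ is $1$-Lipschitz, $|\rho_x - \rho_y| \leq \mdist{x}{y}{\S} \leq 1$, whence $\cosh(\rho_x)/\cosh(\rho_y) \in [e^{-1},e]$. Applying \cref{prop:injradius_estimate} to both points then yields $\inj{x} \asymp \ell_i\cosh(\rho_x) \asymp \ell_i \cosh(\rho_y) \asymp \inj{y}$, and truncation at $1$ preserves the comparison.

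If instead $y \notin C(\gamma_i)$, then any path from $x$ to $y$ must cross $\partial C(\gamma_i)$, so $W(\gamma_i) - \rho_x \leq \mdist{x}{y}{\S} \leq 1$. Since $\inj{\cdot} = \sinh^{-1}(1)$ on $\partial C(\gamma_i)$, \cref{prop:injradius_estimate} forces $\ell_i \cosh(W(\gamma_i)) \asymp 1$, hence $\ell_i \cosh(\rho_x) \geq e^{-1}\ell_i \cosh(W(\gamma_i)) \gtrsim 1$ and $\inj{x} \gtrsim 1$. The symmetric argument applied to $y$ (with respect to whichever collar contains it, or directly via the collar lemma if $y$ lies in the thick part) gives $\inj{y} \gtrsim 1$, so again $\truncinj{x} \asymp \truncinj{y} \asymp 1$. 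The main minor obstacle is to cleanly justify the boundary asymptotic $\cosh(W(\gamma_i)) \asymp 1/\ell_i$, which can be read off either from the explicit formula $W(\gamma_i) = \sinh^{-1}(1/\sinh(\ell_i/2))$ for $\ell_i \leq 2\sinh^{-1}(1)$, or by evaluating \cref{prop:injradius_estimate} on $\partial C(\gamma_i)$.
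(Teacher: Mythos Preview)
Your proof is correct and follows essentially the same thick/thin case analysis as the paper. The only cosmetic difference is that in the ``near the boundary of a collar'' case you invoke \cref{prop:injradius_estimate} together with the boundary asymptotic $\ell_i\cosh(W(\gamma_i))\asymp 1$, whereas the paper uses the fourth item of \cref{lem:collar_lemma} directly; these are equivalent computations.
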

\begin{proof}
We appeal to \cref{lem:collar_lemma}. We use the term \emph{thin part} for the set $\union_{i=1}^s C(\gamma_i)$ and \emph{thick part} for its complement. There are four cases for the relative location of $x$ and $y$. Firstly, if $x$ and $y$ are both in the thick part, then both $\truncinj{x}$ and $\truncinj{y}$ are bounded from below by $\sinh^{-1}(1)$, so $\truncinj{x} \asymp \truncinj{y}$. Secondly, if $x$ is in the thick part and $y$ is in the thin part, then $y$ is at distance at most $1$ from $\partial C(\gamma_i)$ for some $i$, and so by the fourth item in \cref{lem:collar_lemma}, with $d\leq 1$, if $\inj{y,S} \leq \sinh^{-1}(1)$, then
    \begin{align*}
        \inj{y,\S} &= \sinh^{-1}\left(\cosh\left(\frac{1}{2}\length{\gamma_i}\right)\cosh(d) - \sinh(d) \right) \\
        &\geq \sinh^{-1}\left(\cosh(d) - \sinh(d) \right) \geq \sinh^{-1}\left(\cosh(1) - \sinh(1) \right) \, .
    \end{align*}
We learn that in this case the injectivity radius at $y$ is bounded from below by a universal constant, and so $\truncinj{y} \asymp \truncinj{x}$ follows. Thirdly, if $x$ and $y$ are both in the thin part but belong to different collars, then both $x$ and $y$ are at distance at most $1$ from the boundary of some collar, and the same calculation as above apply again to show that both injectivity radii are bounded below. Lastly, suppose $x$ and $y$ are both in the thin part and belong to the same collar $C(\ell(\gamma))$. Writing $x = (\rho_1,\theta_1)$ and $y = (\rho_2, \theta_2)$ in the cylindrical coordinates of \cref{lem:collar_lemma}, we have by \cref{prop:injradius_estimate} that 
    \begin{equation*}
        \frac{\inj{x,\S}}{\inj{y,\S}} \asymp \frac{\ell(\gamma) \cosh(\rho_1)}{\ell(\gamma) \cosh(\rho_2)} \, .
    \end{equation*}
    Since $\cosh(\rho) \asymp e^{\rho}$ and $\abs{\rho_1 - \rho_2} \leq 1$, the result follows. \end{proof}

Let $M$ be a Riemannian surface and $\gamma :[a,b] \to M$ a unit speed geodesic in $M$. For every $r \in \reals$ and $t \in [a,b]$, let $\varphi_\gamma(r,t)$ be the point obtained by going a signed distance $r$ along the geodesic which is perpendicular to $\gamma$ at the point $\gamma(t)$. If a point $z \in M$ has exactly one pair $(r,t)$ such that $z = \varphi_\gamma(r,t)$, then $(r,t)$ are called the \textbf{Fermi coordinates} of $z$ with respect to $\gamma$.

When $\gamma$ is a geodesic in the hyperbolic plane $\hyperbolic$, the map $\varphi_\gamma: \reals^2 \to \hyperbolic$ is a bijection, and hence it is just a coordinate change; the metric tensor of $\hyperbolic$ in Fermi coordinates is given by 
\begin{equation} \label{eq:fermi_metric_tensor}
    ds^2 = dr^2 + \cosh(r)^2 dt^2 \, ,
\end{equation}
see \cite[Chapter 1]{buser_book}. For a general surface, the map $\varphi_\gamma$ is usually not one-to-one; however, we can still use Fermi coordinates as long as $\varphi_\gamma$ is restricted to domains where it is one-to-one. 

\begin{proposition} \label{prop:fermi_is_possible} Let $M$ be a Riemannian surface whose curvature is bounded from above by a nonnegative constant $K$ and let $\gamma : [0, \ell] \to M$ be a unit speed minimal geodesic. Consider the set  
\begin{equation*}
     D_\gamma = \big \{ (r,t) : t\in [0,\ell] \, , |r| \leq \min \big \{\frac{\inj{\gamma(t)}}{4}, ~\frac{\pi}{2 \sqrt{K}}\big \}\big \} \, .
\end{equation*}
Then the map $\varphi_\gamma:D_\gamma\to M$ is one-to-one.
\end{proposition}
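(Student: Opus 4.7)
The plan is a proof by contradiction. Suppose $\varphi_\gamma(r_1, t_1) = \varphi_\gamma(r_2, t_2) = z$ for two distinct elements of $D_\gamma$. The sub-case $t_1 = t_2$ is immediate: the map $r \mapsto \varphi_\gamma(r, t_1) = \exp_{\gamma(t_1)}(r N(t_1))$ (where $N(t_1)$ denotes a unit normal to $\gamma$ at $\gamma(t_1)$) is injective on $\{|r| < \inj{\gamma(t_1)}\}$ since $\exp_{\gamma(t_1)}$ is a diffeomorphism on the open injectivity ball; as $|r_i| \leq \inj{\gamma(t_1)}/4$, this forces $r_1 = r_2$, contradicting distinctness.

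For the main sub-case $t_1 < t_2$, the three relevant minimizing geodesic segments --- $\gamma|_{[t_1,t_2]}$ and the two perpendicular segments of lengths $|r_i|$ --- form a geodesic triangle $T$ with vertices $\gamma(t_1), \gamma(t_2), z$, right angles at the first two vertices, and some angle $\theta \geq 0$ at $z$. The two perpendicular sides are uniquely minimizing because $|r_i| < \inj{\gamma(t_i)}$. To verify the $\gamma$-segment is also uniquely minimizing, I would combine the triangle inequality $t_2 - t_1 \leq |r_1| + |r_2|$ with the 1-Lipschitz property of the injectivity radius: setting $u = t_2 - t_1$, we have $u \leq (\inj{\gamma(t_1)} + \inj{\gamma(t_2)})/4$ and $\inj{\gamma(t_j)} \leq \inj{\gamma(t_i)} + u$, which combine to give $u \leq \tfrac{2}{3}\min(\inj{\gamma(t_1)}, \inj{\gamma(t_2)})$, strictly below both injectivity radii.

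With the geodesic triangle $T$ in hand, Gauss--Bonnet yields $\theta = \int_T K_M \, dA$. In non-positive curvature (e.g.\ the paper's hyperbolic setting where $K_M \equiv -1$) this immediately gives $\theta \leq 0$, hence $\theta = 0$, forcing $T$ to be degenerate and contradicting $t_1 < t_2$. For general curvature $\leq K$ with $K > 0$, I would additionally use $|r_i| \leq \pi/(2\sqrt{K})$ and Alexandrov (CAT($K$)) angle comparison: the comparison triangle sits in the model sphere of radius $1/\sqrt{K}$ with its two base angles $\geq \pi/2$ and the two adjacent sides of length $\leq \pi/(2\sqrt{K})$. Two applications of the spherical law of cosines then force the base length $c$ to equal $0$ or $\pi/\sqrt{K}$, both of which contradict $0 < t_2 - t_1 \leq |r_1| + |r_2| \leq \pi/\sqrt{K}$ (the upper bound is attained only in the degenerate boundary case, which is handled by a straightforward continuity argument).

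The main technical obstacle I expect is verifying the hypotheses for the Alexandrov comparison in the $K > 0$ regime --- specifically, that $T$ lies in a simply connected CAT($K$) neighborhood of diameter $< \pi/\sqrt{K}$. Fortunately, for the paper's primary application to hyperbolic surfaces the Gauss--Bonnet step alone closes the argument, so this subtlety can be avoided entirely.
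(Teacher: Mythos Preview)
Your approach is essentially the same as the paper's: form the geodesic triangle with vertices $z,\gamma(t_1),\gamma(t_2)$ having right angles at the two base vertices, and derive a contradiction via comparison with the model space of curvature $K$. The paper invokes a triangle comparison theorem from Klingenberg directly, while you split into Gauss--Bonnet (for $K\le 0$) and Alexandrov/CAT$(K)$ comparison (for $K>0$); these are minor packaging differences.

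There is, however, a gap in your write-up that the paper closes and you do not. You flag the need to place $T$ in a simply connected CAT$(K)$ neighborhood only as an issue for the $K>0$ regime, and claim that ``for the paper's primary application to hyperbolic surfaces the Gauss--Bonnet step alone closes the argument, so this subtlety can be avoided entirely.'' This is not right: Gauss--Bonnet in the form $\theta=\int_T K_M\,dA$ requires $T$ to bound a topological disk, which is precisely the same simply-connected hypothesis. On a hyperbolic surface with short geodesics the three sides of $T$ could a priori wind around a collar, and then the formula fails (an Euler-characteristic correction appears). So the verification is needed in \emph{all} curvature regimes, not just $K>0$.

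The paper's fix is short and worth noting: since $d(z,\gamma(t_i))\le \inj{\gamma(t_i)}/4$ and $\inj{\cdot}$ is $1$-Lipschitz, one gets $\inj{z}\ge \tfrac{3}{4}\inj{\gamma(t_i)}$, and a triangle-inequality check then shows the entire triangle lies in the normal ball $B(z,\inj{z})$. Your own estimate $t_2-t_1\le \tfrac{2}{3}\min_i \inj{\gamma(t_i)}$ is exactly the ingredient needed for this, so the gap is easy to close --- but you should close it before invoking Gauss--Bonnet, not after.
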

\begin{proof}    
Assume by contradiction that $z = \varphi_\gamma(t,r) = \varphi_\gamma(s,\rho)$ for $t \neq s$. Since $\mdist{z}{\gamma(t)}{\S} \leq \inj{\gamma(t)}/4$ and $\inj{\cdot}$ is $1$-Lipschitz, we deduce that $\inj{z} \geq 3 \inj{\gamma(t)}/4$; similarly, $\inj{z} \geq 3 \inj{\gamma(s)}/4$. Thus, the geodesic triangle $T$ whose points are $z$, $\gamma(t)$ and $\gamma(s)$ is contained in the ball $B_M(z, \inj{z})$. By the triangle inequality, all sides of $T$ have lengths $\leq \pi / \sqrt{K}$. By a standard triangle comparison theorem \cite[][Theorem 2.7.6]{Klingenberg_riemannian_geometry}, every angle of $T$ is smaller than the corresponding angle of a model triangle with the same side-lengths in the sphere of curvature $K$. Since the lengths of the segments connecting $z$ to $\gamma(t)$ and $z$ to $\gamma(s)$ are smaller than $\pi/2\sqrt{K}$, $T$ must have an acute angle on the segment connecting $\gamma(t)$ to $\gamma(s)$; this is a contradiction to the fact that both angles on this segment are right angles.
\end{proof}

\section{Extremal length and the inverse injectivity radius}\label{sec:extremal_length_proof}
The goal of this section is to prove \cref{thm:main_extremal_length}. Let us briefly explain the intuition of the proof. For each $\rho:S \to [0,\infty)$ we need to find a curve from $\partial B_S(x,r_x)$ to $\partial B_S(y,r_y)$ which has small $\rho$-length. An adversarial choice of $\rho$ would put significant weight on the narrow regions of the surface (that is, inside thin collars) since there it can capture all the curves passing through the collar for a relatively small price to pay to the contribution to  $\int_\S \rho^2$. Hence we wish to find curves that try to avoid thin collars. To that aim, the reciprocal injectivity weight (\cref{def:weight}) is useful, since it ``punishes'' curves that go through thin collars. An initial approach to bounding the extremal length is as follows: take a curve $\gamma$ which minimizes the reciprocal injectivity weight among all curves connecting $x$ to $y$ and consider a tubular neighborhood $D$ around $\gamma$ which is as wide as possible so as to not intersect itself; now draw a random curve in $D$ which roughly follows the same trajectory as $\gamma$ and bound its expected $\rho$-length. 

There are several technical obstacles with this approach. First, $\gamma$ need not be differentiable, making analysis in its tubular neighborhood difficult. Second, the tubular neighborhood $D$ can be very far from rectangular; it may go through various narrow and wide regions, fanning in and out many times, making the calculations complicated. To overcome both difficulties, we conformally stretch $S$ so that a collar of width $\ell\ll 1$ turns to a cylinder of width $\Theta(1)$ and length $1/\ell$ while incurring only a constant additive error to the curvature at any point. 
In this new  surface $\mS$ the calculation of extremal length is easier, and since extremal length is a conformal invariant, the bound we obtain holds also for the original manifold $\S$. In \cref{sec:stretch} we show how to perform the stretching and in \cref{sec:proofEL} we bound the extremal length. 

\subsection{A conformal change of metric}\label{sec:stretch} 


Any conformal coordinate change can be expressed by multiplying the metric by a smooth function $f:\S \to (0,\infty)$. Since we would like a collar of width $\ell \ll 1$ to turn into a cylinder of constant width, it makes sense to take $f$ simply as $1/\truncinj{z}$ for all $z \in \S$. However, this function is not differentiable, and moreover, it is hard to control the curvature change that it induces. We will instead take a smooth approximation of it.

Let $\gamma_1, \ldots, \gamma_s$ be the set of all simple closed geodesics of length $\leq 2\sinh^{-1}(1)$ in $\S$. Recall from the collar lemma (\cref{lem:collar_lemma}) that $\S$ can be partitioned into parts, where one part is ``thick'' and has injectivity radius bounded below by $\sinh^{-1}(1)$, while the others (the ``collars'') are isometric to cylinders of length $2W(\gamma_i)$ defined in \cref{lem:collar_lemma}. For every $i = 1 \ldots, s$, define the function $f_i :\reals \to (0,\infty)$ by    
    \begin{equation*}
        f_i(x) = \begin{cases}
            \frac{1}{\ell(\gamma_i) \cosh(x)} & \abs{x} < W(\gamma_i) - 2 \\
            1 & \abs{x} \geq W(\gamma_i) - 2 \,.
        \end{cases}
    \end{equation*}
It is a straightforward calculation that $W(\gamma_i) \in \left(\log(\frac{1}{\ell(\gamma_i)}) + 1, \log(\frac{1}{\ell(\gamma_i)})+2\right)$, and hence the jump discontinuity in $f_i$ is bounded above by a constant. \cref{prop:injradius_estimate} then implies that if $z \in C(\gamma_i)$ has Fermi coordinates $(\rho, \theta)$ relative to the geodesic $\gamma_i$, then $f_i(\rho) \asymp 1/\inj{z}$.
    
Let $\psi(x):\reals \to \reals$ be the standard smooth mollifier
    \begin{equation*}
        \psi(x) = \begin{cases}
            ce^{-\frac{1}{1-x^2}} & \abs{x} \leq 1 \\
            0 & \mathrm{otherwise} \,,
        \end{cases} 
    \end{equation*}
where $c$ is chosen so that $\int_{-1}^{1} \psi(x)dx = 1$. We now define $f: \S \to (0,\infty)$ as follows: 
    \begin{equation}\label{def:f}
        f(z) = \begin{cases}
			1 & \text{if $z \not \in \union_{i}C(\gamma_i)$} \\
            \displaystyle \int_{-W(\gamma_i)}^{W(\gamma_i)} f_i(x)\psi(\rho - x)dx & \text{if $z\in C(\gamma_i)$ and has Fermi coordinate $(\rho, \theta)$} 
            \, .
		 \end{cases}   
   \end{equation}
It is immediate to check that $f$ is smooth. Hence, if $g$ is the hyperbolic metric on $\S$, then the surface $\mS$ defined on the same point-set as $\S$ and equipped with the metric  $\tilde{g} = f(z)g$, is a Riemannian surface conformal to $\S$. We denote the length of curves on $\mS$ by $\tilde{\ell}$, the integral over curves on $\mS$ by $\int \abs{d\tilde{z}}$, the area of sets by $\tilde{\vol}$, and the Fermi coordinate map by $\tilde{\varphi}$. 

\begin{proposition} \label{prop:change_of_metric} The function $f$ and the resulting surface $\mS$ satisfy the following properties.     
    \begin{enumerate}
        \item[$(1)$] \label{enu:f_is_asymp_to_inj} $f(z) \asymp 1/\truncinj{z}$ for all $z \in \S$.
        
        \item[$(2)$] \label{enu:curvature_is_bounded} There exist universal constants $K_1, K_2 \geq 0$ so that  
        $K(z) \in [-K_1, K_2]$ 
        for all $z \in \mS$, where $K : \mS \to \reals$ is the curvature in $\mS$. 
        \item[$(3)$] \label{enu:injectivity_is_constant} $\inj{\mS} \gtrsim 1$.
        
        \item[$(4)$] \label{enu:metric_tensor_is_bounded} There exists a universal constant $r_0>0$ such that the following holds. Let $(r,t)$ be the Fermi coordinates relative to a minimal geodesic $\gamma$ in $\mS$, and assume $\abs{r} \leq r_0$. Then the metric tensor under these coordinates is given by
        \begin{equation*}
            ds^2 = dr^2 + G(r,t)^2 dt^2 \, ,
        \end{equation*}
        where
        \begin{equation}\label{eq:GrtBound}
            \cos(r \sqrt{K_2}) \leq G(r,t) \leq \cosh(r \sqrt{K_1}) \, .
        \end{equation}
    \end{enumerate}
\end{proposition}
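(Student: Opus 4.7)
The plan is to verify the four items in order. Throughout, the geometric intuition is that the conformal rescaling is designed to flatten each thin collar $C(\gamma_i)$ into an approximately flat cylinder of circumference $\asymp 1$ and length $\asymp 1/\ell(\gamma_i)$.

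For the first item, I would split by location. In the thick part $f \equiv 1$ and the collar lemma gives $\truncinj{z} \asymp 1$. Inside a collar $C(\gamma_i)$ with Fermi coordinates $(\rho, \theta)$, \cref{prop:injradius_estimate} gives $\truncinj{z} \asymp \ell(\gamma_i)\cosh\rho$, so $f_i(\rho) \asymp 1/\truncinj{z}$ on $|\rho| \leq W(\gamma_i) - 2$. The mollifier $\psi$ has support $[-1,1]$, and by \cref{lem:weight_is_subexponential} the injectivity radius (like $\cosh\rho$) varies by at most a constant factor over unit intervals, so the convolution satisfies $f \asymp f_i \asymp 1/\truncinj{z}$ throughout the collar; the bounded jump of $f_i$ at $|\rho| = W(\gamma_i) - 2$ noted in the text means the same comparison holds across the mollification zone.

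For the second item, apply the conformal change of curvature formula in two dimensions. In the thick part $\tilde K = -1$. In the collar interior, $f = f_i(\rho)$ depends only on $\rho$, and the hyperbolic Laplacian in cylindrical coordinates reads
\begin{equation*}
\Delta_g h = \partial_\rho^2 h + \tanh\rho \cdot \partial_\rho h + \frac{1}{\ell(\gamma_i)^2\cosh^2\rho}\partial_\theta^2 h \, .
\end{equation*}
Plugging in $\log f_i(\rho) = -\log\ell(\gamma_i) - \log\cosh\rho$ and using the identity $\mathrm{sech}^2\rho + \tanh^2\rho = 1$ gives the miraculous cancellation $\Delta_g \log f = -1$, precisely matching $K = -1$ and keeping $\tilde K$ bounded (indeed essentially flat) in the collar interior. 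In the transition zone $|\rho| \in [W(\gamma_i) - 3, W(\gamma_i) - 1]$, the mollification gives $f$ and its derivatives bounded, and $f$ is bounded above and below by constants by the first item, so $\tilde K$ stays bounded there too.

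For the third item, I would lower-bound the length of the shortest closed geodesic of $\mS$ and invoke Klingenberg's injectivity radius theorem together with the second item. Since each collar becomes approximately a flat cylinder of circumference $\asymp 1$, any closed geodesic in the homotopy class of some $\gamma_i$ supported inside the collar has $\mS$-length $\gtrsim 1$, and any other nontrivial closed geodesic must traverse the thick part where the metric is unchanged and distances are $\gtrsim 1$. For the fourth item, the Fermi form $ds^2 = dr^2 + G^2 dt^2$ is governed by the Jacobi field equation $\partial_r^2 G + K(r,t) G = 0$ with $G(0,t)=1$ and $\partial_r G(0,t)=0$; Sturm comparison with the constant-curvature models directly gives $\cos(r\sqrt{K_2}) \leq G \leq \cosh(r\sqrt{K_1})$, valid as long as the lower bound is positive, so one can take $r_0$ as the minimum of $\pi/(2\sqrt{K_2})$ and a small multiple of the injectivity radius bound from the third item (this simultaneously guarantees that Fermi coordinates are well-defined via \cref{prop:fermi_is_possible}).

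The main obstacle is the curvature bound of the second item: it is not obvious a priori that a conformal factor scaling like $1/\truncinj{z}$, which blows up on thin collars, produces a bounded curvature. The success hinges on the identity $\mathrm{sech}^2\rho + \tanh^2\rho = 1$ giving $\Delta_g \log f_i = -1$, exactly canceling $K = -1$ at leading order; without this cancellation the new curvature would diverge as $\ell(\gamma_i) \to 0$, and all the subsequent extremal length estimates built on this rescaling would collapse.
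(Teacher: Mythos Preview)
Your outline for items 1 and 4 matches the paper essentially verbatim (item 4 via the Jacobi/Riccati comparison). There are, however, two gaps.

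For item 2, the identity $\Delta_g \log f_i = -1$ is correct, but it holds for the \emph{unmollified} $f_i(\rho)=1/(\ell\cosh\rho)$, not for the actual conformal factor $f$, which is the convolution of $f_i$ with $\psi$. Even in the deep collar interior this convolution does not return $f_i$ (the function $1/\cosh$ is not fixed by mollification), so you have not actually bounded $\tilde K$ there; and your transition-zone argument (``$f$ and its derivatives bounded by constants'') does not transfer to the deep collar either, since $f$ is large there. The paper closes this with the more robust estimate $|f'|,|f''|\lesssim f$ throughout the collar---immediate from the fact that $f_i$ varies by at most a constant factor on unit intervals while $\psi,\psi',\psi''$ are bounded---which gives $|\Delta_g\log f|\lesssim 1$ and hence $|\tilde K|\lesssim 1/f\lesssim 1$. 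Your cancellation is a nice explanation of \emph{why} such an $f$ should work, but it is not the argument that bounds the curvature of the mollified metric.

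For item 3, you only treat noncontractible closed geodesics. Since $\tilde K$ may be positive, $\mS$ can in principle carry short \emph{contractible} closed geodesics, and Klingenberg's lemma requires ruling these out as well. The paper handles this case via Gauss--Bonnet: a contractible closed geodesic bounds a disk $\Omega$ with $\int_\Omega\tilde K=2\pi$, so $\tilde\vol(\Omega)\geq 2\pi/K_2$, and an isoperimetric inequality (transferred from $\S$ using that $f$ is essentially constant on the scale of $\gamma$) then forces $\tilde\ell(\gamma)\gtrsim 1$. For the noncontractible case the paper's argument is also cleaner than your case split: since any noncontractible loop through $z$ has $\S$-length at least $2\inj{z}$, one gets $\tilde\ell(\gamma)\asymp\int_\gamma\truncinj{z}^{-1}|dz|\gtrsim 1$ in one line.
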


\begin{proof}

Firstly, since $\psi$ has finite support and $f_i(x) \asymp f_i(y)$ for all $\abs{x-y} \leq 2$ and all $i$, the smoothing cannot change the value of $f_i$ by more than a constant factor. Hence $f(z) \asymp 1/\truncinj{z}$ for all $z \in \S$ by \cref{prop:injradius_estimate}.
        
Secondly, the function $f$ is constant and equal to $1$ in the thick part of $\S$, as well as in all collars $C(\gamma_i)$ with $W(\gamma_i) \leq 2$, and so the curvature there remains $-1$. It therefore suffices to calculate the new curvature in the collars $C(\gamma_i)$ with $W(\gamma_i)>2$. Recall that the curvature of $\mS$ is given by 
        \begin{equation*}
            K = \frac{1}{f} \left(-1 - \Delta \log(f) \right) \,
        \end{equation*}
(see e.g. \cite[][Theorem 7.30]{lee_introduction_to_riemannian_manifolds}). Let $(\rho, \theta)$ be the cylindrical coordinates of a point $z \in C(\gamma_i)$ as in \cref{lem:collar_lemma}. Observe that $f(z)$ does not depend on $\theta$, so by slight abuse of notation, we may write $f(z) = f(\rho)$ inside the collar. The Laplacian of the $\log$ is then given by 
        \begin{equation*}
            \Delta \log f = \frac{1}{\cosh(\rho)} \frac{\partial}{\partial \rho} \left( \cosh(\rho) \frac{\partial}{\partial \rho} \log(f) \right) \,,
        \end{equation*}
        which yields, after a straightforward calculation,
        \begin{equation*}
            K = -\frac{1}{f} \left(\frac{f^2 + f(\tanh(\rho)f' + f'') - (f')^2 }{f^2} \right) \, .
        \end{equation*}
Since $\psi(x)$ has bounded first and second derivatives, by our choice of $f$ we have that $f(\rho) \asymp f'(\rho) \asymp f''(\rho)$, and so the absolute value of the expression in the parenthesis is bounded above by a constant. Since $f$ is bounded from below by a universal constant, this gives the desired curvature bounds.

Thirdly, by a result of Klingenberg (see e.g. \cite[][Lemma 6.4.7]{petersen_riemannian_geometry}), if a Riemannian surface $M$ has curvature bounded above by a positive constant $K_2$, then  
        \begin{equation*}
            \inj{M} \geq \min \bigg\{\frac{\pi}{\sqrt{K_2}}, \frac{1}{2} \cdot (\text{length of shortest closed geodesic}) \bigg\} \,.
        \end{equation*}
We have just proved above that $\mS$ has curvature bounded above by a universal constant, so the first term in the minimum above is bounded away from $0$. Let $\gamma \subseteq \mS$ be a shortest closed geodesic in $\mS$. Assume first that $\gamma$ is contractible (note that we cannot immediately rule out this case because $\mS$ is no longer of non-positive curvature) and denote its interior by $\Omega$. If $\tilde{\ell}(\gamma) \geq 1$ there is nothing to prove, so we also assume that $\tilde{\ell}(\gamma) \leq 1$. Since $f(z) \gtrsim 1$, it follows that $\ell(\gamma) \lesssim 1$ as well. By a standard isoperimetric inequality \cite[][Theorem V.5.3]{chavel_book2}, we have $\ell(\gamma) \gtrsim \vol(\Omega)^{1/2}$ and since $\ell(\gamma) \lesssim 1$, \cref{lem:weight_is_subexponential} implies that the values of $f(z)$ for all $z \in \gamma \union \Omega$ differ by at most a universal multiplicative constant. Hence $\tilde{\ell}(\gamma) \gtrsim \tilde{\vol}(\Omega)^{1/2}$. On the other hand, since $\gamma$ is a closed geodesic in $\mS$ and $\Omega$ is a topological disc, by the Gauss-Bonnet theorem, $\int_\Omega K(z) d\tilde{z} = 2\pi$ which implies that $\tilde{\vol}(\Omega) \geq 2\pi/K_2$. This shows that $\tilde{\ell}(\gamma) \gtrsim 1$ as wanted.

The case that $\gamma$ is non-contractible is easier. Then $\gamma$'s length can be calculated by integrating over points in $\S$:
        \begin{equation*}
            \tilde{\ell}(\gamma) = \int_\gamma f(z) \abs{dz} \asymp \int_\gamma \frac{1}{\truncinj{z}}\abs{dz} \, .
        \end{equation*}
Since the length of $\gamma$ in $S$ is larger than $\inj{z}$ for all $z\in \gamma$, we obtain $\tilde{\ell}(\gamma) \gtrsim 1$.

Lastly, for the fourth assertion, for every $t$ let $\eta_t$ be the geodesic perpendicular to $\gamma$ at $\gamma(t)$. For a fixed $t_0$, the vector field $\frac{\partial}{\partial t}$ is a Jacobi field along $\eta_{t_0}$ and is perpendicular to it. Hence, the function $G(r) = \abs{\frac{\partial}{\partial t}}_{(r,t_0)}$ satisfies the Jacobi equation $\partial_r^2 G + KG = 0$, with initial conditions $G(0) = 1$ and $\partial_r G(0) = 0$ and so the function $h = \frac{\partial_r G}{G}$ satisfies the Riccati equation $\partial_r h + h^2 +K = 0$. The result then follows from standard comparison estimates (see e.g. \cite[][Proposition 25 and the discussion thereafter]{petersen_riemannian_geometry}); for instance, for the upper bound, the real function $f(r)$ satisfying $f'=-f^2 +K_1$ with initial condition $f(0)=0$ is easily seen to be $\left(\log\left(\cosh \left(\sqrt{K_1}r\right)\right)\right)'$. Since we proved that $K \geq -K_1$, standard comparison arguments yield that $h(r) \leq f(r)$ which, by integration, gives the upper bound in \eqref{eq:GrtBound}. 
\end{proof}

\subsection{Proof of \texorpdfstring{\cref{thm:main_extremal_length}}{}}\label{sec:proofEL} 

Let $f:S\to (0,\infty)$ be the function defined  in \eqref{def:f} and let $\mS$ be the resulting surface defined below it. Since extremal length is conformally invariant, it suffices to upper bound $\el{\Gamma}$ on the new manifold $\mS$ instead of on $\S$. 

Let $\gamma$ be a minimal unit speed geodesic in $\mS$ connecting $x$ to $y$. 
We have that 
\begin{equation} \label{eq:equivLs}
\tilde{\ell}(\gamma) \asymp \wdist{x}{y} \, .
\end{equation}
Indeed, this follows since $f(z) \asymp \truncinj{z}^{-1}$ by \cref{prop:change_of_metric} part $(1)$, and since the left hand side equals $\inf_{\eta \subseteq \S} \int_\eta f(z) \abs{dz}$ and the right hand side equals  $\inf_{\eta \subseteq \S} \int_\eta \frac{1}{\truncinj{z}} \abs{dz}$ where both infima are over all curves $\eta$ connecting $x$ to $y$.  

Our goal is to find, for every $\rho:\mS\to [0,\infty)$, a curve in $\Gamma$ with short $\rho$-length. 
We find such a curve by selecting a random curve contained in a tubular neighborhood $D \subseteq \S$ of $\gamma$. Since it is best to use as ``wide'' a domain as possible, the radius of this tubular neighborhood, while forced to start small, will quickly expand until it reaches the largest possible width, as we now explain. 

Let $\alpha > 0$ be a universal constant small enough so that the following properties hold:
\begin{enumerate}
    \item The Fermi coordinate map $\tilde{\varphi}_{\gamma}$ is one-to-one in the region $[-\alpha, \alpha]\times [0,\tilde{\ell}(\gamma)]$; this is possible by \cref{prop:fermi_is_possible} together with \cref{prop:change_of_metric} parts $(2)$ and $(3)$. 

    \item We have that $\alpha \leq K_2^{-1/2}$ where $K_2\in(0,\infty)$ is the universal constant from \cref{prop:change_of_metric} part $(2)$.
    
    \item The balls $B_{\mS}(x,\alpha \frac{r_x}{\truncinj{x}})$ and $B_{\mS}(y,\alpha \frac{r_y}{\truncinj{y}})$ are  contained in the balls $B_\S(x, r_x)$ and $B_\S(y, r_y)$ respectively; this is possible by \cref{prop:change_of_metric} part $(1)$. 
\end{enumerate}


We now define the tubular neighborhood $D$, which will be given in terms of Fermi coordinates around the geodesic $\gamma$. The construction of $D$ depends on whether $\tilde{\ell}(\gamma)\geq 1$ or not. Let us assume now that $\tilde{\ell}(\gamma)\geq 1$ and deal with the other case later. We define a continuous piecewise differentiable function $r : [0, \tilde{\ell}(\gamma)] \to (0,\infty)$ by 
\begin{equation*}
    r(t) = \begin{cases}            
            \alpha \left(4t + (1-4t) \frac{r_x}{\truncinj{x}} \right) & t \in [0, 1/4] \\
            
            \alpha & t \in [1/4, \tilde{\ell}(\gamma) - 1/4] \\

            \alpha \left( 4(\tilde{\ell}(\gamma)-t) + (1-4(\tilde{\ell}(\gamma)-t))\frac{r_y}{\truncinj{y}} \right) & t \in [\tilde{\ell}(\gamma) - 1/4, \tilde{\ell}(\gamma)] \, .
        \end{cases}
\end{equation*}
Since $r_x \leq \truncinj{x}$ and $r_y \leq \truncinj{y}$ we have that $r(t) \leq \alpha$ and $\abs{r'(t)} \lesssim 1$ for all $t\in[0,\tilde{\ell}(\gamma)]$. Next, for every $q \in [-1,1]$, let $\gamma_{q} : [0, \tilde{\ell}(\gamma)] \to \mS$ be given by $\gamma_{q}(t) = \tilde{\varphi}_{\gamma}(q \cdot r(t),t)$. Lastly, set $D = \{\gamma_{q}(t) \mid q\in [-1,1], t \in [0, \tilde{\ell}(\gamma)] \}$, see \cref{fig:geodesic_fan_in_out_large_ell}.

\begin{figure}[!ht]
        \centering
        \includegraphics[width=0.9 \textwidth]{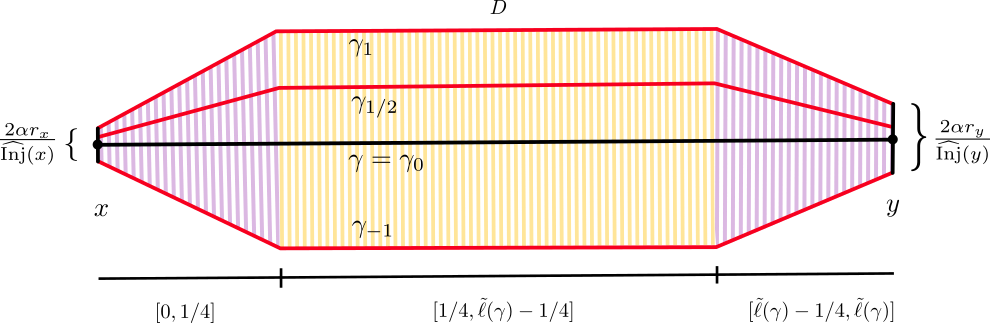}
        \caption{The domain $D$ as a subset of $\tilde{\S}$. Note that the uniform thickness in the middle part is guaranteed by the injectivity radius lower bound of $\tilde{\S}$. The paths $\gamma_q$ start at distances in $[-\alpha r_x/\truncinj{x}, \alpha r_x/\truncinj{x}]$ from $x$, fan out to a spread of $\alpha$, then fan back towards $y$.}
        \label{fig:geodesic_fan_in_out_large_ell}
\end{figure}

            


Denote by $\mu$ the Riemannian area measure on $D$ according to the metric $\tilde{g}$, and by $\nu$ the measure on $D$ of $\gamma_q(t)$, obtained by taking $q$ to be a uniform random variable in $[-1,1]$ and $t$ an independent uniform random variable in $[0,\tilde{\ell}(\gamma)]$. Let $\rho : \mS \to [0,\infty)$ be a Borel measurable function. Integrating $\rho$ according to $\nu$, we have
\begin{equation*}
    \int_{D} \rho ~d\nu = \frac{1}{2 \tilde{\ell}(\gamma)} \int_{-1}^1 \int_0^{\tilde{\ell}(\gamma)}  \rho(\gamma_{q}(t))~dt ~dq \, .
\end{equation*}
Thus, there exists a $q^*\in [-1,1]$ such that
\begin{equation*}
    \int_0^{\tilde{\ell}(\gamma)}  \rho(\gamma_{q^*}(t))~dt \leq \tilde{\ell}(\gamma) \int_D \rho ~ d\nu \, .
\end{equation*}
For each fixed $q\in[-1,1]$ and $t\in[0,\tilde{\ell}(\gamma)]$ we have $|\gamma'_q(t)|^2 = q^2 r'(t)^2 + G(r,t)^2$, where $G(r,t)$ is from \cref{prop:change_of_metric} part $(4)$. Since $\abs{r'(t)} \lesssim 1$ and by \cref{prop:change_of_metric} parts $(2)$ and $(4)$, we get that $|\gamma'_q(t)| \lesssim 1$. Hence,

\begin{equation*}
    \int_{\gamma_{q}} \rho ~\abs{d\tilde{z}} = \int_0^{\tilde{\ell}(\gamma)} \rho(\gamma_{q}(t)) \abs{\gamma_{q}'(t)} ~dt \lesssim \int_0^{\tilde{\ell}(\gamma)} \rho(\gamma_{q}(t)) ~dt \, 
\end{equation*}
and so 
\begin{equation*}
    \int_{\gamma_{q^*}} \rho ~\abs{d\tilde{z}} \lesssim \tilde{\ell}(\gamma) \int_D \rho ~ d\nu \, .
\end{equation*}
We bound $\int_{D} \rho ~d\nu$ by comparing it to $\mu$ using the Cauchy-Schwarz inequality,
\begin{equation*}
    \int_{D} \rho ~d\nu = \int_{D} \rho \frac{d\nu}{d\mu} ~d\mu \leq \sqrt{\int_{D} \rho^2 ~d\mu} ~\sqrt{\int_{D} \left( \frac{d\nu}{d\mu}\right)^2 ~d\mu}  \, .
\end{equation*}
Since every curve $\gamma_{q}$ contains a curve in $\Gamma$, we take $\gamma_{q^*}$ and obtain 
\begin{equation}\label{eq:radon_nikodym_bound_on_extremal_length2}
    \el{\Gamma} = \sup_\rho \inf_{\eta \in \Gamma} \frac{L(\eta, \rho)^2}{\int_{D} \rho^2 ~d\mu} \leq \sup_\rho \inf_{q \in [-1,1]} \frac{\left(\int_{\gamma_{q}} \rho ~\abs{d\tilde{z}} \right)^2}{\int_{D} \rho^2 ~d\mu} \lesssim \tilde{\ell}(\gamma)^2 \int_{D} \left(\frac{d\nu}{d\mu} \right)^2 d\mu \, .
\end{equation}
We now bound the Radon-Nikodym derivative in the right-hand side. For a point $z\in D$, the Radon-Nikodym derivative is given by 
\begin{equation}\label{eq:radon_nikodym_basic}
    \frac{d\nu}{d\mu}(z) = \lim_{\eps \to 0} \frac{\nu\left( B_{\mS}(z,\eps)\right)}{\mu \left( B_{\mS}(z,\eps) \right)} \asymp \lim_{\eps\to 0} \frac{\mathrm{Leb}\left(\{(q,t) \in [-1,1] \times [0,\tilde{\ell}(\gamma)] \mid \gamma_{q}(t) \in B_{\mS}(z, \eps) \} \right)}{ 2 \tilde{\ell}(\gamma) \eps^2}\, ,
\end{equation}
where we use the fact that $\mu(B_{\mS}(z,\eps)) \asymp \eps^2$ since the curvature of $\mS$ is in $[-K_1,K_2]$ (so for small enough $\eps$, $\mu(B_{\mS}(z,\eps))$ is larger than the area of a spherical cap of radius $\eps$ on a sphere of radius $1/\sqrt{K_2}$, but smaller than the volume of a hyperbolic disc of curvature $-K_1$ of radius $\eps$, see e.g. \cite[][Theorems 11.14, 11.19]{lee_introduction_to_riemannian_manifolds}).

\begin{claim} \label{eq:ss_are_close_together}
    Let $z_1, z_2 \in D$ have Fermi coordinates $(r_1, t_1)$ and $(r_2, t_2)$ with respect to $\gamma$ under the metric $\tilde{g}$.
    For any $\eps>0$ small enough, if the distance between $z_1$ and $z_2$ is at most $\eps$, then $\abs{t_1 - t_2} \lesssim \eps$. 
\end{claim}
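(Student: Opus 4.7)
The plan is to express a minimizing curve between $z_1$ and $z_2$ in Fermi coordinates, and to use the explicit form of the metric tensor given by \cref{enu:metric_tensor_is_bounded} of \cref{prop:change_of_metric}. Let $\sigma:[0,1] \to \mS$ be a length-minimizing curve with $\tilde{\ell}(\sigma) \leq \eps$, and write $\sigma(s) = \tilde{\varphi}_{\gamma}(r(s), t(s))$. Then
\begin{equation*}
\eps \geq \tilde{\ell}(\sigma) = \int_0^1 \sqrt{\dot{r}(s)^2 + G(r(s), t(s))^2 \dot{t}(s)^2} \, ds \geq \Big(\inf_{s\in[0,1]} G(r(s), t(s))\Big) \cdot |t_2 - t_1|,
\end{equation*}
so the claim reduces to showing that $G(r(s), t(s)) \gtrsim 1$ along $\sigma$. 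Since $z_1, z_2 \in D$, the endpoint values satisfy $|r_1|, |r_2| \leq \alpha \leq r_0$, and for $\eps$ smaller than a suitable universal constant the entire path $\sigma$ stays in the region $\{|r| \leq r_0\}$. On this region \cref{enu:metric_tensor_is_bounded} of \cref{prop:change_of_metric} gives $G(r,t) \geq \cos(r \sqrt{K_2})$, which is $\gtrsim 1$ by the choice of $\alpha$ behind \eqref{eq:r_small_so_cos_large}.

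The only subtlety is checking that $\sigma$ is covered by the Fermi chart. Combining \cref{enu:injectivity_is_constant} and \cref{enu:curvature_is_bounded} of \cref{prop:change_of_metric} with \cref{prop:fermi_is_possible} shows that $\tilde{\varphi}_{\gamma}$ is a diffeomorphism on a tubular strip of some universal half-width $\alpha' > \alpha$ around $\gamma|_{[0, \tilde{\ell}(\gamma)]}$. Since $z_1 \in D$ lies within Fermi $r$-coordinate at most $\alpha$ of $\gamma$, for $\eps \leq (\alpha'-\alpha)/2$ the curve $\sigma$ cannot escape this strip in the $r$-direction. Near the endpoints $t=0$ and $t = \tilde{\ell}(\gamma)$ one extends $\gamma$ slightly past $x$ and $y$ along the geodesic it traces, using $\inj{\mS} \gtrsim 1$ to ensure the extension remains a minimal geodesic on a short segment so that \cref{prop:fermi_is_possible} continues to apply to the enlarged chart.

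The main obstacle I foresee is this endpoint extension: the interior estimate is essentially a one-line computation once the Fermi chart is set up, but verifying that $\sigma$ does not exit the chart near $t=0$ or $t=\tilde{\ell}(\gamma)$ requires a short continuity or extension argument, which is precisely why the claim is phrased ``for $\eps$ small enough.'' Conceptually, the moral of the proof is that the map $z \mapsto t(z)$ is Lipschitz with constant $1/\inf G \asymp 1$ with respect to $\tilde{g}$-distance inside the Fermi chart, and all the work is in ensuring that this Lipschitz statement can actually be applied to the pair $(z_1, z_2)$.
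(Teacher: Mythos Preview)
Your proposal is correct and follows essentially the same approach as the paper: take a minimal geodesic between $z_1$ and $z_2$, write it in Fermi coordinates, and use the lower bound $G(r,t)\geq\cos(r\sqrt{K_2})\gtrsim 1$ from \cref{enu:metric_tensor_is_bounded} of \cref{prop:change_of_metric} and \eqref{eq:r_small_so_cos_large} to bound $|t_2-t_1|$ by the arclength. The paper handles the chart-containment issue in one line by noting that for small $\eps$ the ball $B(z,\eps)$ is geodesically convex, so the minimal geodesic stays inside it; your more explicit discussion of the tubular strip and endpoint extension is a fine (if slightly more elaborate) substitute.
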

\begin{proof}

    Let $\eta:[0, \mdist{z_1}{z_2}{\mS}] \to \mS$ be a unit speed minimal geodesic from $z_1$ to $z_2$. Denoting the Fermi coordinates of $\eta(s)$ by $(r(s), t(s))$, we have
    \begin{align*}
        \eps &\geq \mdist{z_1}{z_2}{\mS} = \int_{0}^{\mdist{z_1}{z_2}{\mS}} \sqrt{\left(\frac{dr}{ds}\right)^2 + G(r(s), t(s))^2\left(\frac{dt}{ds}\right)^2} ds \\
        & \geq \int_{0}^{\mdist{z_1}{z_2}{\mS}} \cos(r(s)\sqrt{K_2}) \frac{dt}{ds}ds  \gtrsim \abs{t_2 - t_1} \, , 
    \end{align*}
where the second inequality is by  \cref{prop:change_of_metric} part $(4)$ and the last inequality is since we chose $\alpha \leq K^{-1/2}$ . 
\end{proof}

Fix $z\in D$ and let $t$ be such that there exists a $q \in [-1,1]$ with $\gamma_{q}(t) \in B_{\mS}(z,\eps)$. For every such $t$, the curve $\eta_t:[-1,1] \to \mS$ given by $\eta_t(q) = \tilde{\varphi}_\gamma(q\cdot r(t), t)$ is a minimal geodesic with speed $r(t)$. Since it is minimal, the length of $\eta_t \intersect B_{\mS}(z, \eps)$ is bounded by $2\eps$, and so the Lebesgue measure of all $q$ such that $\eta_t(q) \in B_{\mS}(z,\eps)$ is bounded by $2\eps / r(t)$. Denoting the Fermi coordinates of $z$ by $(r_z, t_z)$ in $\mS$ relative to $\gamma$, by \cref{eq:ss_are_close_together} and the fact that $r(t)$ is continuous and with bounded derivative, for small enough $\varepsilon$,
\begin{equation*}
    \mathrm{Leb}\left(\{(q,t) \in [-1,1] \times [0,\tilde{\ell}(\gamma)] \mid \gamma_{q}(t) \in B_{\mS}(z, \eps) \} \right) \lesssim \int_{t_z - C\eps}^{t_z + C\eps} \frac{\eps}{r(t)} ~dt \asymp \frac{\eps^2}{r(t_z)} \, .
\end{equation*}
We deduce by \eqref{eq:radon_nikodym_basic} that $\frac{d\nu}{d\mu}(z) \lesssim 1/(r(t_z) \tilde{\ell}(\gamma))$.  Combining this with \eqref{eq:radon_nikodym_bound_on_extremal_length2} and \cref{prop:change_of_metric} part $(4)$ we get
\begin{align}
    \el{\Gamma} \lesssim \int_D \frac{d\mu(z)}{r(t_z)^2}  \leq \int_{0}^{\tilde{\ell}(\gamma)} \int_{-r(t)}^{r(t)} \frac{\cosh(r(t)\sqrt{K_1})}{r(t)^2} ds dt 
    \lesssim \int_{0}^{\tilde{\ell}(\gamma)}\frac{1}{r(t)} ~dt \label{eq:el_as_integral} \,.
\end{align}
We partition the integral over $t$ in \eqref{eq:el_as_integral} into three parts as in the definition of $r(t)$. Firstly, when $t\in [1/4, \tilde{\ell}(\gamma)-1/4]$ we have that $r(t)=\alpha$, and so
\begin{equation}\label{eq:weighted_distance_term_in_el}
    \int_{1/4}^{\tilde{\ell}(\gamma)-1/4} \frac{1}{r(t)} ~dt \lesssim \tilde{\ell}(\gamma)  \asymp  \wdist{x}{y}  \, ,
\end{equation}  
by \eqref{eq:equivLs}. Secondly, when $t \in [0,1/4]$, we get    \begin{equation}\label{eq:logx_term_in_el}
        \int_{0}^{1/4}\frac{1}{r(t)}~dt = \int_{0}^{1/4} \frac{1}{\alpha \left( 4t + (1-4t) \frac{r_x}{\truncinj{x}}\right)} ~dt =  \frac{\log \left(\frac{\truncinj{x}}{r_x} \right)}{4 \alpha \left(1-\frac{r_x}{\truncinj{x}}\right)} \lesssim \log \left(\frac{\truncinj{x}}{r_x} \right)\, ,
    \end{equation}
since $r_x \leq \truncinj{x}/2$. A similar calculation shows that when $t \in [\tilde{\ell}(\gamma)-1/4, \tilde{\ell}(\gamma)]$, we have  
\begin{equation}\label{eq:logy_term_in_el}
    \int_{\tilde{\ell}(\gamma)-1/4}^{\tilde{\ell}(\gamma)} \frac{1}{r(t)}~dt \lesssim \log \left(\frac{\truncinj{y}}{r_y} \right)\, .
\end{equation}
Putting \eqref{eq:weighted_distance_term_in_el}, \eqref{eq:logx_term_in_el} and \eqref{eq:logy_term_in_el} into \eqref{eq:el_as_integral} proves the desired inequality. 

Lastly, we treat the  $\tilde{\ell}(\gamma) < 1$. Set
\begin{equation*}
    r(t) = \begin{cases}            
            \alpha \left(2t + (\tilde{\ell}(\gamma)-2t) \frac{r_x}{\truncinj{x}} \right) & t \in [0, \tilde{\ell}(\gamma)/2] \\

            \alpha \left( 2(\tilde{\ell}(\gamma)-t) + (\tilde{\ell}(\gamma)-2(\tilde{\ell}(\gamma)-t))\frac{r_y}{\truncinj{y}} \right) & t \in [\tilde{\ell}(\gamma)/2, \tilde{\ell}(\gamma)] \, ,
        \end{cases}
\end{equation*}
see \cref{fig:geodesic_fan_in_out_small_ell}. The reasoning that led to \eqref{eq:el_as_integral} holds in this case as well, and we need only calculate the integral on its right-hand side. When $t \in [0, \tilde{\ell}(\gamma)/2]$, we get 
\begin{align*}
    \int_0^{\tilde{\ell}(\gamma)/2} \frac{1}{r(t)}dt = \int_0^{\tilde{\ell}(\gamma)/2}\frac{1}{\alpha\left(2t+\left(\tilde{\ell}\left(\gamma\right)-2t\right)\frac{r_{x}}{\truncinj{x}}\right)} 
    \lesssim \log \left( \frac{\truncinj{x}}{r_x}\right) \, .
\end{align*}
A similar calculation shows that $$ \int_{\tilde{\ell}(\gamma)/2}^{\tilde{\ell}(\gamma)}\frac{1}{r(t)} \lesssim \log \left( \frac{\truncinj{y}}{r_y}\right) \, ,$$
again leading to the desired inequality.
\begin{figure}[!ht]
        \centering
        \includegraphics[width=0.5 \textwidth]{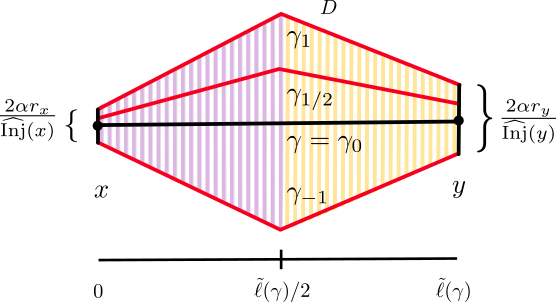}
        \caption{The domain $D$ as a subset of $\tilde{\S}$. In this case there is no  middle part corresponding to the main term $\wdist{x}{y}$ in \eqref{eq:main_extremal_length}.        
        }
        \label{fig:geodesic_fan_in_out_small_ell}
\end{figure} \qed

\section{Proof of the eigenvalues lower bound}\label{sec:lower_bound_proof}

The goal of this section is to prove \cref{thm:main_eigenvalue_lower_bound}. Let $S$ be a compact hyperbolic surface of genus $g$ and let $\{\phi_0, \phi_1, \ldots\}$ be an orthonormal basis of smooth Laplacian eigenvectors, with $\phi_k$ corresponding to $\lambda_k$. See e.g.~\cite[][Theorem 7.2.6]{buser_book} for the existence of such a basis. Fix $\lambda>0$ and define the function $e_\lambda : \S^2 \to \reals$ by
\begin{equation*}
    e_\lambda(x,z) = \sum_{k \geq 1 : \lambda_k \leq \lambda} \phi_k(x)  \phi_k(z) \, ;
\end{equation*}
the function $e_\lambda$ is called the \emph{spectral kernel}. For $x\in \S$, we define 
\begin{equation*}
     \mu_x(\lambda) = e_\lambda(x,x) = \sum_{k \geq 1 : \lambda_k \leq \lambda} \phi_k(x)^2 \, .
\end{equation*} 
For $x$ with $\mu_x(\lambda) > 0$, we define the function $f_x :\S \to \reals$ by
\begin{equation*}
    f_x(z) = \frac{e_\lambda(x,z)}{\sqrt{\mu_x(\lambda)}} \, ,
\end{equation*}
and note that we suppressed $\lambda$ from the notation $f_x$. Observe that $f_x(x) = \sqrt{\mu_x(\lambda)}$ and that
\begin{equation}\label{eq:norm_of_f}
    \norm{f_x}_2^2 = \frac{\int_\S e_\lambda(x,z)^2~dz}{\mu_x(\lambda)} =  \frac{\int_\S \left(\sum_{k \geq 1 : \lambda_k \leq \lambda} \phi_k(x) \cdot \phi_k(z) \right)^2 ~dz}{\mu_x(\lambda)} = \frac{\mu_x(\lambda)}{\mu_x(\lambda)} = 1 \, .
\end{equation}

We will require an upper bound on the gradient of $f$ similar to Bernstein inequalities for Laplacian eigenfunctions (see e.g.~\cite{DM20}). We will use the following bound on the gradient of $f_x$; it follows from known methods (such as \cite{ortega_pridhnani_gradient_bound}) but we were unable to find a reference that implies it, therefore we provide its short proof in \cref{sec:gradientUpper}.

\begin{lemma}\label{lem:gradient_bound} For all $z \in \S$ and $\lambda>0$ we have
    \begin{equation*}
      \abs{\grad f_x(z)} \lesssim  e^{2\sqrt{\lambda}} \truncinj{z}^{-1} \, .  
    \end{equation*}        
\end{lemma}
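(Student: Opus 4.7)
The approach is to expand $f_x$ in the Laplacian eigenbasis, use Cauchy--Schwarz to reduce to a spectral sum of eigenfunction gradients, and then estimate that sum via a heat-kernel identity combined with an on-diagonal heat-kernel bound.

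Since the coefficients $c_k := \phi_k(x)/\sqrt{\mu_x(\lambda)}$ in $f_x = \sum_{k:\lambda_k\le\lambda}c_k\phi_k$ satisfy $\sum_k c_k^2 = 1$ (by the same computation as in \eqref{eq:norm_of_f}), Cauchy--Schwarz gives $\abs{\grad f_x(z)}^2 \le \sum_{k:\lambda_k\le\lambda}\abs{\grad \phi_k(z)}^2$. For any $t>0$, using $1\le e^{2t(\lambda-\lambda_k)}$ whenever $\lambda_k\le\lambda$, extending the sum to all $k\ge 0$, expanding $\heat{t}{z}{u} = \sum_k e^{-t\lambda_k}\phi_k(z)\phi_k(u)$, and invoking the orthonormality of $\{\phi_k\}$ yields
\begin{equation*}
\abs{\grad f_x(z)}^2 \le e^{2t\lambda}\sum_{k\ge 0} e^{-2t\lambda_k}\abs{\grad\phi_k(z)}^2 = e^{2t\lambda}\int_\S \abs{\grad_z \heat{t}{z}{u}}^2 du.
\end{equation*}

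The main technical step is to show that for all $t\ge 1$,
\begin{equation*}
\int_\S \abs{\grad_z \heat{t}{z}{u}}^2 du \lesssim \truncinj{z}^{-1}.
\end{equation*}
I plan to derive this by combining a Li--Yau-type pointwise gradient estimate $\abs{\grad_z \heat{t}{z}{u}}^2 \lesssim (1+\mdist{z}{u}{\S}/t)^2\,\heat{t}{z}{u}^2$, which holds on $\S$ with universal constants (its sectional curvature is $-1$), with the $L^2$ identity $\int_\S \heat{t}{z}{u}^2 du = \heat{2t}{z}{z}$ and the on-diagonal estimate $\heat{2t}{z}{z} \lesssim \truncinj{z}^{-1}$ for $t\ge 1$. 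The diagonal bound is obtained by lifting to the universal cover $\hyperbolic$: writing $\heat{t}{z}{u} = \sum_{\gamma\in\pi_1(\S)}\mheat{t}{\tilde z}{\gamma\tilde u}{\hyperbolic}$ and analyzing the collar at $z$ when $z$ lies in the thin part, the dominant contribution comes from the cyclic isometries around the short core geodesic through $z$, and an explicit computation in the Fermi coordinates of \cref{prop:injradius_estimate} yields $\heat{2t}{z}{z} \lesssim \truncinj{z}^{-1}$; in the thick part $\truncinj{z}\asymp 1$ and the bound is immediate.

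Finally, choosing $t = 1/\sqrt{\lambda}$ (which is $\ge 1$ when $\lambda$ is at most a universal constant; for larger $\lambda$ a routine local $L^\infty$-bound on a ball of radius $\truncinj{z}/4$ gives a polynomial-in-$\lambda$ bound that is absorbed by $e^{2\sqrt\lambda}$), we get $e^{2t\lambda} = e^{2\sqrt\lambda}$, so $\abs{\grad f_x(z)}^2 \lesssim e^{2\sqrt\lambda}/\truncinj{z}$. Taking square roots and using $\truncinj{z}\le 1$ (so $\truncinj{z}^{-1/2}\le \truncinj{z}^{-1}$) produces the claim. The central obstacle is the diagonal heat-kernel bound $\heat{2t}{z}{z}\lesssim \truncinj{z}^{-1}$ inside thin collars, which must be handled by explicitly controlling the sum over deck transformations on the universal cover.
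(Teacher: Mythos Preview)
Your route is genuinely different from the paper's, and the reduction to the spectral sum $\sum_{k}e^{-2t\lambda_k}|\nabla\phi_k(z)|^2=\int_\S|\nabla_z\heat{t}{z}{u}|^2\,du$ is correct. However, there is a real gap at the step where you ``combine'' the Li--Yau gradient estimate with the $L^2$ identity. The gradient bound you invoke reads $|\nabla_z\heat{t}{z}{u}|^2\lesssim(1+\mdist{z}{u}{\S}/t)^2\,\heat{t}{z}{u}^2$, so after integrating you face $\int_\S(1+\mdist{z}{u}{\S}/t)^2\heat{t}{z}{u}^2\,du$, \emph{not} $\int_\S\heat{t}{z}{u}^2\,du=\heat{2t}{z}{z}$. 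The distance weight is not harmless on a hyperbolic surface: ball volumes grow exponentially, and $\truncinj{u}^{-1}$ can be arbitrarily large at points $u$ far from $z$, so you cannot simply absorb $(1+d/t)^2$ into a constant. To close this you would need either a gradient bound of the form $|\nabla_z\heat{t}{z}{u}|\lesssim t^{-1/2}\heat{ct}{z}{u}$ for some $c<1$ (so that $\int|\nabla_z\heat{t}{\cdot}{\cdot}|^2\lesssim t^{-1}\heat{2ct}{z}{z}$ directly), or two-sided Gaussian estimates allowing you to trade the polynomial weight for a time shift. Both are available for bounded $t$ on manifolds with $\mathrm{Ric}\ge -K$, but this is nontrivial extra input that your outline suppresses. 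A smaller point: the choice $t=1/\sqrt{\lambda}$ is unnecessary and creates large-time issues in the Li--Yau constants; since $\sum_k e^{-2t\lambda_k}|\nabla\phi_k(z)|^2$ is decreasing in $t$, taking $t=1$ and noting $e^{2\lambda}\le e^{2\sqrt{\lambda}}$ for $\lambda\le 1$ (the only regime in which the lemma is used) suffices.

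The paper's argument avoids heat-kernel machinery entirely. It extends $f_x$ harmonically to $\S\times\reals$ via $h(z,t)=\sum_i\beta_i\phi_i(z)e^{\sqrt{\lambda_i}t}$, applies the Schoen--Yau interior gradient estimate for harmonic functions to bound $|\nabla h(z,0)|^2$ by $\sup_{B((z,0),1)}h^2$, and then controls this supremum by the subharmonic mean-value inequality on a ball of radius $\asymp\truncinj{z}$ together with $\|f_x\|_2=1$ (which absorbs the integral of $h^2$ over a slab). This is more self-contained: one pointwise gradient estimate for harmonic functions plus an elementary mean-value bound, versus the heat-kernel gradient and on-/off-diagonal estimates your route requires.
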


Recall \cref{def:weight} for the weighted distance $\wdist{\cdot}{\cdot}$. For any $x\in \S$ and $R>0$ let 
\begin{equation*}
    \wball{x}{R} = \{ y \in \S \mid \wdist{x}{y} \leq R\} \, .
\end{equation*}

\begin{lemma}\label{lem:x_with_large_weighted_balls_is_good} There exists a universal constant $C\in(0,\infty)$ such that for any  $R \geq 1$, any $x \in \S$ and any $\lambda\in(0,1]$ the following holds: if either $\vol(\wball{x}{R}) \geq \frac{16}{\mu_x(\lambda)}$ or $\wball{x}{R} = \S$, then  
\begin{equation*}
    \mu_x(\lambda) \leq C \lambda \Big (R + \log\big (\frac{1}{\mu_x(\lambda)\wedge \frac{1}{2}}\big )\Big )  \, .
\end{equation*}
\end{lemma}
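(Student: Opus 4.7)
My plan is to apply \cref{thm:main_extremal_length} to a carefully chosen pair of small disks — one around $x$ and one around a companion point $y \in \wball{x}{R}$ — and then combine the resulting upper bound on $\el{\Gamma}$ with a matching lower bound coming from the Dirichlet energy of $f_x$. Write $M := \mu_x(\lambda)$, so that $f_x(x) = \sqrt M$ and $\|f_x\|_2 = 1$ by \eqref{eq:norm_of_f}. Expanding in the basis $\{\phi_k\}$ immediately gives
\begin{equation*}
    \int_{\S} |\grad f_x|^2 = \frac{1}{M}\sum_{k \geq 1 : \lambda_k \leq \lambda} \phi_k(x)^2 \lambda_k \leq \lambda.
\end{equation*}

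First I would locate a point $y \in \wball{x}{R}$ at which $|f_x(y)| \leq \sqrt M/2$. In the case $\vol(\wball{x}{R}) \geq 16/M$, Markov's inequality applied to $\|f_x\|_2^2 = 1$ shows $\{|f_x| \geq \sqrt M/2\}$ has volume at most $4/M$, leaving a positive-measure subset of $\wball{x}{R}$ on which $|f_x| < \sqrt M/2$. In the case $\wball{x}{R} = \S$, the orthogonality of $f_x$ to constants gives $\int_\S f_x = 0$, so by continuity $f_x$ vanishes somewhere. Either way, $|f_x(x) - f_x(y)| \geq \sqrt M/2$.

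Next I would set $r_x := c\sqrt{M \wedge 1}\,\truncinj{x}$ and $r_y := c\sqrt{M \wedge 1}\,\truncinj{y}$ for a small universal $c > 0$, which automatically satisfies $r_x \leq \truncinj{x}/2$ and similarly for $r_y$. By \cref{lem:weight_is_subexponential}, $\truncinj{z} \asymp \truncinj{x}$ throughout $B(x, r_x)$, and since $\lambda \leq \smallestlambda$ is a universal constant, \cref{lem:gradient_bound} reduces to $|\grad f_x| \lesssim 1/\truncinj{x}$ on this ball. Integrating along a minimal geodesic then gives $|f_x(z) - f_x(x)| \lesssim c\sqrt{M \wedge 1} \leq \sqrt M/8$ on $B(x, r_x)$ once $c$ is small, and symmetrically on $B(y, r_y)$. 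Hence $f_x \geq 7\sqrt M/8$ on $B(x, r_x)$ while $|f_x| \leq 5\sqrt M/8$ on $B(y, r_y)$, so the two balls are disjoint, which gives the distance hypothesis $d_\S(x,y) \geq r_x + r_y$ of \cref{thm:main_extremal_length}.

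Applying that theorem yields
\begin{equation*}
    \el{\Gamma} \lesssim \wdist{x}{y} + \log(\truncinj{x}/r_x) + \log(\truncinj{y}/r_y) \lesssim R + \log(1/(M \wedge 1)),
\end{equation*}
since $\truncinj{x}/r_x = 1/(c\sqrt{M \wedge 1})$ (and similarly for $y$) and $\wdist{x}{y} \leq R$. Conversely, testing the definition of $\el{\Gamma}$ against $\rho = |\grad f_x|$ and observing that every $\gamma \in \Gamma$ connects the $f_x \geq 7\sqrt M/8$ region to the $f_x \leq 5\sqrt M/8$ region gives
\begin{equation*}
    \el{\Gamma} \geq \frac{\bigl(\inf_{\gamma \in \Gamma} \int_\gamma |\grad f_x|\bigr)^2}{\int_\S |\grad f_x|^2} \geq \frac{(\sqrt M / 4)^2}{\lambda} = \frac{M}{16\lambda}.
\end{equation*}
Comparing the two inequalities produces $M \lesssim \lambda(R + \log(1/(M \wedge 1)))$, as claimed. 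The delicate step will be calibrating the radii: they must be small enough that $f_x$ is nearly constant on each disk (so that every connecting curve really witnesses a $\sqrt M$-scale drop), yet large enough that $\log(\truncinj{\cdot}/r_\cdot)$ does not exceed $\log(1/(M \wedge 1))$; the scaling $r \asymp \sqrt{M \wedge 1}\,\truncinj{\cdot}$ threads this needle and handles the regimes $M \leq 1$ and $M > 1$ simultaneously.
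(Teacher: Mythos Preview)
Your proof is correct and follows essentially the same route as the paper: locate $y\in\wball{x}{R}$ where $f_x$ is small via Markov/mean-zero, shrink the disks using \cref{lem:gradient_bound} so that $f_x$ is nearly constant on each, invoke \cref{thm:main_extremal_length} for the upper bound on $\el{\Gamma}$, and compare with a lower bound driven by $\int|\grad f_x|^2\le\lambda$. The only difference is cosmetic: for the lower bound on $\el{\Gamma}$ the paper cites the harmonic-function/Dirichlet-energy characterization of extremal length, whereas you plug $\rho=|\grad f_x|$ directly into the definition, which is slightly more self-contained but amounts to the same inequality.
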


\begin{proof} We first find a point $y\in \wball{x}{R}$ such that $f_x(y) \leq \frac{1}{4}f_x(x)$. Indeed, if $\wball{x}{R} = \S$, then it is immediate that there exists $y\in \wball{x}{R}$ with $f_x(y)=0$ since $\int_y f_x(y)d\S=0$. Otherwise, we have that
\begin{equation}
    \vol \big (\{z \in \S \mid f_x(z) > \frac{1}{4}f_x(x) \} \big) < \frac{||f_x||^2}{f_x(x)^2/16} = \frac{16}{\mu_x(\lambda)} \, ,
\end{equation} 
hence in the case $\vol(\wball{x}{R}) \geq \frac{16}{\mu_x(\lambda)}$ we again obtain such a point $y$. 

Since $\lambda \leq 1$, by \cref{lem:gradient_bound} and \cref{lem:weight_is_subexponential}  there exists a universal constant $C'\in (0,\infty)$ such that $\abs{\grad f_x(z)}\leq C'/\truncinj{x}$  for all $z\in B_\S(x,\truncinj{x}/2)$ and $\abs{\grad f_x(z)}\leq C'/\truncinj{y}$ for all $z\in B_\S(x,\truncinj{y}/2)$. We choose $c=\frac{1}{4C'} \wedge \frac{1}{2}$ and set 
\begin{equation}
   r_x = c (f_x(x) \wedge \frac{1}{2}) \truncinj{x}  \, ,
\end{equation}
to obtain that 
\begin{equation*}
   \inf_{z\in B_\S(x,r_x)}  f_x(z)  \geq {3 f_x(x) / 4} \, .
\end{equation*}
Similarly, setting 
\begin{equation*}
 r_y = c (f_x(x)\wedge \frac{1}{2}) \truncinj{y} \, ,
\end{equation*}
yields that
\begin{equation*}
   \sup_{z \in B_\S(y,r_y)}  f_x(z)  \leq f_x(x) /2 \, . 
\end{equation*}
In particular, the balls $B_\S(x,r_x)$ and $B_\S(y,r_y)$ are disjoint. Let $\Gamma$ denote the set of all rectifiable curves between the boundaries of the balls which do not enter the balls. Since the two balls are disjoint and $r_x \leq \truncinj{x}/2$ and $r_u \leq \truncinj{y}/2$ we obtain by  \cref{thm:main_extremal_length} that
\begin{equation*}
   \el{\Gamma} \lesssim R + \log \left( \frac{1}{c(f_x(x)\wedge \frac{1}{2})} \right) \lesssim R + \log \left( \frac{1}{f_x(x)\wedge \frac{1}{2}} \right) \, ,
\end{equation*}
where in the last inequality we absorbed $c$ into the implicit constant, which we may do since $R\geq 1$ and $\log ( 1/(f_x(x)\wedge 1/2))\geq \log 2$. We now let $A$ and $B$ be the level sets $\{ z : f_x(z) \leq f_x(x)/2 \}$ and $\{z : f_x(z) \geq 3f_x(x)/4\}$, respectively. Since $A$ and $B$ contain $B_\S(x,r_x)$ and $B_\S(y,r_y)$, respectively, the extremal length of the set of curves between $A$ and $B$ is not larger than $\el{\Gamma}$.

It is well known that given two disjoint open sets $A$ and $B$ of $\S$, the extremal length of the family of curves connecting $\partial A$ and $\partial B$ in $\S \setminus (A\cup B)$ is the inverse of the Dirichlet energy $\int_S \abs{\grad f}^2$ of the unique harmonic function  $f$ in $\S \setminus (A\cup B)$ taking $1$ on $\partial A$ and $0$ on $\partial B$, see \cite[Lemma III1.1]{MardenRodin66}. Since harmonic functions minimize the Dirichlet energy among all functions with the same boundary values, the function $f_x$ has larger Dirichlet energy, so after rescaling the boundary values we obtain that
\begin{equation}\label{eq:EnergyLowerBound} \int_\S \abs{\grad f_x(z)}^2 ~dz \geq \frac{f_x(x)^2/16}{\el{\Gamma}} \gtrsim \frac{f_x(x)^2}{R + \log( \frac{1}{f_x(x)\wedge \frac{1}{2}})} \, .
\end{equation}
Since $f_x$ has unit norm and is a linear combination of eigenfunctions with corresponding eigenvalues at most $\lambda$ we may bound
\begin{equation*}
   \lambda \geq \left<\Delta f_x, f_x \right>  = \left<\grad f_x, \grad f_x \right> = \int_\S \abs{\grad f_x(z)}^2 ~dz \, . 
\end{equation*}
We put this and $f_x(x)^2=\mu_x(\lambda)$ in \eqref{eq:EnergyLowerBound} and obtain that 
\begin{equation*}
   \mu_x(\lambda) \lesssim \lambda \Big (R + \log\big (\frac{1}{\mu_x(\lambda)\wedge \frac{1}{2}}\big )\Big ) \, , 
\end{equation*}
as required.
\end{proof}

\begin{lemma}\label{lem:integral_over_weighted_balls}
    Let $x \in \S$ and $R \geq 1/2$ be such that $\wball{x}{R} \neq \S$. Then
    \begin{equation*}
        \int_{\wball{x}{R}}\frac{1}{\truncinj{z}^2} ~dz \gtrsim R \, .
    \end{equation*}
\end{lemma}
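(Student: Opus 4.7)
The plan is to exhibit $\gtrsim R$ pairwise disjoint Riemannian balls inside $\wball{x}{R}$, each contributing a universal constant to $\int\truncinj{z}^{-2}\,dz$. Since $\wball{x}{R}\neq\S$, fix any $y\in\S\setminus\wball{x}{R}$ and a rectifiable curve $\gamma$ from $x$ to $y$ with weighted length $L\leq\wdist{x}{y}+\tfrac{1}{10}$, parameterized by weighted arclength. For $i=0,1,\ldots,N$ with $N=\lfloor R-\tfrac{1}{3}\rfloor$ set $z_i=\gamma(i)$; since $R\geq 1$ there are at least $N+1\geq 2R/3$ such sample points, each satisfying $\wdist{x}{z_i}\leq i\leq R$.

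The near-minimality of $\gamma$ forces $\wdist{z_i}{z_j}\geq|i-j|-\tfrac{1}{5}\geq\tfrac{4}{5}$ for $i\neq j$: if not, an infimizing curve from $z_i$ to $z_j$ would have weighted length $<|i-j|-\tfrac{1}{5}$, and concatenating $\gamma|_{[0,i]}$, this curve, and $\gamma|_{[j,L]}$ would give a path from $x$ to $y$ of weighted length $<L-\tfrac{1}{5}<\wdist{x}{y}$, a contradiction. Fix the small universal constant $c=\tfrac{1}{4}$ and let $B_i=B(z_i,c\,\truncinj{z_i})$ be Riemannian balls. Since $\truncinj{\cdot}$ is $1$-Lipschitz on $\S$, $\truncinj{w}\geq(1-c)\truncinj{z_i}$ for $w\in B_i$, so a Riemannian minimizing geodesic from $z_i$ to any $w\in B_i$ has weighted length at most $c/(1-c)=\tfrac{1}{3}$. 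Combining this with $\wdist{z_i}{z_j}\geq\tfrac{4}{5}$ and the triangle inequality for $\wdist{\cdot}{\cdot}$, the $B_i$ are pairwise disjoint and contained in $\wball{x}{R}$ (since $\wdist{x}{w}\leq i+\tfrac{1}{3}\leq R$ for $w\in B_i$).

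Each $B_i$ is an isometrically embedded hyperbolic disc of radius $c\truncinj{z_i}\leq 1$, so $\vol(B_i)\gtrsim\truncinj{z_i}^2$; by \cref{lem:weight_is_subexponential}, $\truncinj{w}\asymp\truncinj{z_i}$ on $B_i$ (as $\mdist{w}{z_i}{\S}\leq 1$), and therefore $\int_{B_i}\truncinj{z}^{-2}\,dz\gtrsim 1$. Summing over the $\gtrsim R$ disjoint balls contained in $\wball{x}{R}$ yields the desired estimate. The main obstacle is the disjointness/containment bookkeeping: the constant $c$ must be small enough that the weighted radius $c/(1-c)$ of each $B_i$ is less than half the weighted separation $\wdist{z_i}{z_j}$, yet bounded away from $0$ so that each $B_i$ (of Riemannian area $\asymp\truncinj{z_i}^2$) contributes a universal constant to the integral; the choice $c=1/4$ balances both constraints.
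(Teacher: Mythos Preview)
Your argument is correct and takes a genuinely different route from the paper's. The paper chooses a point $y\in\partial\wball{x}{R}$ minimizing the \emph{Riemannian} distance $d_\S(x,y)$, takes a minimal geodesic $\gamma$ in $\S$, and then integrates $\truncinj{\cdot}^{-2}$ over a Fermi tubular neighborhood of $\gamma$ of width $\asymp\truncinj{\gamma(t)}$ (invoking \cref{prop:fermi_is_possible} to guarantee the coordinate map is injective). This requires a case split according to whether $d_\S(x,y)\leq 1$, and a slightly delicate argument to control the weight of the last $c$-segment of $\gamma$. You instead take a curve that is near-minimal for the \emph{weighted} distance, sample $\asymp R$ points at unit weighted-arclength spacing, argue via near-minimality that they are $\geq\tfrac{4}{5}$ apart in $\wdist{\cdot}{\cdot}$, and then pack pairwise disjoint Riemannian balls $B(z_i,\tfrac14\truncinj{z_i})$ inside $\wball{x}{R}$, each contributing $\gtrsim 1$ to the integral.

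Your approach is more combinatorial and arguably more elementary: it avoids Fermi coordinates and the associated case analysis, relying only on the $1$-Lipschitz property of $\inj{\cdot}$ and the triangle inequality for $\wdist{\cdot}{\cdot}$. The paper's approach, on the other hand, reuses machinery already in place for \cref{thm:main_extremal_length} and produces the lower bound from a single connected tubular region rather than a disjoint union of small balls. Both give the same $\gtrsim R$ bound; yours is self-contained, while the paper's is more in keeping with the Fermi-coordinate theme of the surrounding sections.
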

\begin{proof}
Since $\wball{x}{R} \neq \S$ the boundary $\partial \wball{x}{R}$ is non-empty. Let $y$ be a point in $\partial \wball{x}{R}$ which minimizes the hyperbolic distance $\mdist{x}{y}{\S}$. We denote by $\gamma$ a minimal unit speed geodesic in $\S$ between $x$ and $y$, which is contained in $\wball{x}{R}$ by the choice of $y$. 

We proceed by analyzing two cases. First, if $\ell=\mdist{x}{y}{\S} \leq 1$, then all points $z\in B_\S(x,\ell)$ have $\truncinj{z} \asymp \truncinj{x}$ uniformly by \cref{lem:weight_is_subexponential}. Since $y \in \partial \wball{x}{R}$ we must have that $\w{\gamma} = R$, hence $\truncinj{x}^{-1} \gtrsim R/\ell$. We get
\begin{equation*}
    \int_{\wball{x}{R}}\frac{1}{\truncinj{z}^2} ~dz \geq \int_{B_\S(x,\ell/2)} \frac{1}{\truncinj{z}^2} ~dz \gtrsim \frac{\ell^2}{\truncinj{x}^2} \gtrsim R^2 \geq R/2 \, ,
\end{equation*}
where the first and last inequalities are since $R \geq 1/2$.

Next we handle the case $\ell>1$. Let $c\in(0,1/4)$ be a small universal constant that will be chosen later. Parameterizing $\gamma$ by length, we first note that for any $t \in [0,\ell-c]$ we must have  
\begin{equation}\label{eq:BallContainment}
    B_\S(\gamma(t), c\truncinj{\gamma(t)}) \subseteq \wball{x}{R} \, ,   
\end{equation}
since otherwise the ball $B_\S(\gamma(t), c\truncinj{\gamma(t)})$ contains some other point of $\partial \wball{x}{R}$ which we can connect to $\gamma(t)$ in a path of length at most $c$, contradicting the minimality of $\ell$.

Consider the set $D = \{ (t,r) : t \in [0,\ell-c], |r|\leq c\truncinj{\gamma(t)}\}$ which is a subset of $D_\gamma$ defined in \cref{prop:fermi_is_possible}. Since $\gamma$ is a unit speed minimal geodesic connecting two points, \cref{prop:fermi_is_possible} implies that the Fermi coordinate map $\varphi_\gamma$ is one-to-one. By \eqref{eq:BallContainment} we have $\varphi_\gamma(D) \subseteq \wball{x}{R}$ so
\begin{eqnarray} \label{eq:finalLowerBound}
    \int_{\wball{x}{R}}\frac{1}{\truncinj{z}^2} ~dz &\geq& \int_{\varphi_\gamma(D)} \frac{1}{\truncinj{z}^2} ~dz  = \int_{0}^{\ell-c} \int_{-c\truncinj{\gamma(t)}}^{c\truncinj{\gamma(t)}} \frac{1}{\truncinj{\varphi_\gamma(t,r)}^2} \cosh(r) dr dt \nonumber \\
    &\gtrsim& \int_{0}^{\ell-c} \frac{c}{\truncinj{\gamma(t)}}dt = c \cdot \left(\w{\gamma} - \w{\gamma[\ell-c,\ell]} \right) \,.
\end{eqnarray}
where we used \eqref{eq:fermi_metric_tensor} and the fact that $\truncinj{\varphi_\gamma(t,r)}\asymp \truncinj{\gamma(t)}$.

Let $z=\gamma(\ell-c)$, so that $z\in \wball{x}{R}$ and $d_S(x,z) \geq 1/2$. It follows that if $c$ is small enough, then $\inj{z} \geq {\sqrt{c}/R}$. Indeed, otherwise, 
by \cref{lem:weight_is_subexponential}, any curve $\eta$ connecting $x$ to $z$ must have $\w{\eta} \gtrsim \truncinj{z}^{-1} \geq R/\sqrt{c}$, just by considering its last tip of length $1/2$, which is a contradiction if $c > 0$ is small enough since $\wdist{x}{z} \leq R$. By \cref{lem:weight_is_subexponential} we have $\inj{\gamma(t)} \gtrsim \sqrt{c}/R$ for all $t\in [\ell-c,\ell]$ hence $\w{\gamma[\ell-c,\ell]}  \lesssim \sqrt{c} R$. This with \eqref{eq:finalLowerBound} and $\w{\gamma}= R$ concludes the proof.

\end{proof}

\begin{lemma}\label{lem:smallball_has_small_volume}
    For every $\eps \in (0,1)$ and $R \geq 1$, let 
    \begin{equation*}
        \smallball{R}{\eps} = \{x\in \S \mid \vol \left( \wball{x}{R} \right) < \eps R \tand \wball{x}{R} \neq \S \} \, .
    \end{equation*}
    Then
    \begin{equation*}
        \vol \left(\smallball{R}{\eps}\right) \lesssim \eps \aw(\S) \vol(\S) \, .
    \end{equation*}
\end{lemma}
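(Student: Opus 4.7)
The plan is a Vitali-type packing/covering argument driven by \cref{lem:integral_over_weighted_balls}. First I would pick a maximal $R$-separated subset $\{x_i\}_{i=1}^{N}\subseteq \smallball{R}{\eps}$ with respect to the weighted pseudometric $\wdist{\cdot}{\cdot}$: that is, $\wdist{x_i}{x_j}>R$ for $i\neq j$, and the family cannot be enlarged while preserving this property.

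By maximality every $y\in \smallball{R}{\eps}$ satisfies $\wdist{y}{x_i}\le R$ for some $i$, so the balls $\{\wball{x_i}{R}\}_{i=1}^N$ cover $\smallball{R}{\eps}$. Since each $x_i\in\smallball{R}{\eps}$ has $\vol(\wball{x_i}{R})<\eps R$, the covering half of the argument gives
\[
\vol(\smallball{R}{\eps}) \;\leq\; \sum_{i=1}^{N}\vol(\wball{x_i}{R}) \;<\; N\eps R,
\]
and so it suffices to show $N\lesssim \aw(\S)\vol(\S)/R$.

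For this I use the packing half. By the triangle inequality for $\wdist{\cdot}{\cdot}$ and the $R$-separation, the half-radius balls $\wball{x_i}{R/2}$ are pairwise disjoint, and each is contained in $\wball{x_i}{R}\neq \S$ (because $x_i\in \smallball{R}{\eps}$). Applying \cref{lem:integral_over_weighted_balls} to each $\wball{x_i}{R/2}$ gives $\int_{\wball{x_i}{R/2}}\truncinj{z}^{-2}\,dz\gtrsim R$, and summing the disjoint contributions yields
\[
N R \;\lesssim\; \sum_{i=1}^{N}\int_{\wball{x_i}{R/2}}\frac{1}{\truncinj{z}^2}\,dz \;\leq\; \int_{\S}\frac{1}{\truncinj{z}^2}\,dz \;=\; \aw(\S)\vol(\S),
\]
so $N \lesssim \aw(\S)\vol(\S)/R$. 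Combining with the covering bound yields $\vol(\smallball{R}{\eps})\lesssim \eps\, \aw(\S)\vol(\S)$, as required.

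The only mild technical point is the invocation of \cref{lem:integral_over_weighted_balls} at radius $R/2$, which falls below its stated threshold $1$ when $R\in[1,2)$; inspection of the lemma's proof shows that the same order-$R$ lower bound persists for any radius bounded away from zero, so this causes no real obstacle. If one prefers to avoid this, one may replace the separation constant $R$ by $\sigma R$ for a large enough absolute $\sigma\geq 2$ so that $\sigma R/2\geq 1$ is automatic; the covering bound then worsens only by the harmless multiplicative factor $\vol(\wball{x_i}{\sigma R})/\vol(\wball{x_i}{R})$, which is absorbed into $\lesssim$ after noting one can instead carry out the covering with radius $R$ and the packing with radius $\sigma R$ applied to two separate maximal families. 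Otherwise the argument is a routine combination of a maximal packing with the geometric input of the previous lemma.
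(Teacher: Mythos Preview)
Your argument is correct and essentially identical to the paper's: both take a maximal family in $\smallball{R}{\eps}$ with pairwise disjoint $\wball{x_i}{R/2}$, use maximality to cover $\smallball{R}{\eps}$ by the $\wball{x_i}{R}$, bound the volume by $N\eps R$, and then bound $N$ via \cref{lem:integral_over_weighted_balls} applied to the disjoint half-balls. You even flag the minor technicality that \cref{lem:integral_over_weighted_balls} is stated for radii $\geq 1$ while it is invoked at $R/2$, a point the paper passes over silently; your observation that the lemma's proof works verbatim for radii bounded away from zero is the right way to handle it (your alternative fix with a separate $\sigma R$-packing is unnecessarily convoluted).
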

\begin{proof}
Let $x_1, \ldots, x_m$ be a maximal set of points in $\smallball{R}{\eps}$ such that the interiors of $\wball{x_i}{R/2}$ are pairwise disjoint. By maximality, for every $x \in \smallball{R}{\eps}$ the set $\wball{x}{R/2}$ intersects some $\wball{x_i}{R/2}$, so the triangle inequality implies that $\wdist{x}{x_i} \leq R$ and we deduce that the balls $\{\wball{x_i}{R}\}_{i=1}^m$ cover $\smallball{R}{\eps}$. As the volume of each ball is at most $\eps R$, we have that $\vol(\smallball{R}{\eps}) \leq \eps m R$. On the other hand, 
\begin{equation*}
    \vol(\S) \cdot \aw(\S) = \int_{\S} \frac{1}{\truncinj{z}^2} ~dz \geq \sum_{i=1}^m \int_{\wball{x_i}{R/2}}\frac{1}{\truncinj{z}^2} ~dz \quad \gtrsim \quad m R,
\end{equation*}    
by \cref{lem:integral_over_weighted_balls}, giving the desired result.
\end{proof}

\subsection{Proof of \texorpdfstring{\cref{thm:main_eigenvalue_lower_bound}}{}}

Let $C\in(0,\infty)$ be the universal constant from \cref{lem:x_with_large_weighted_balls_is_good}, set $c=\frac{1}{2C}$ and let $\lambda \in (0,1/(4C)^2)$ be fixed. For each $j \geq 1$ we set the numbers
\begin{equation*}
   \mu_j = 2^j \sqrt{\lambda \aw(\S)} \, , \qquad R_j = c 2^j 
\sqrt{\frac{\aw(\S)}{\lambda}} = c \mu_j / \lambda \, .
\end{equation*}
This choice of parameters guarantees that 
\begin{equation}\label{eq:ineq}
 C \lambda R_j + C\lambda \log\big (\frac{1}{\mu_j \wedge \frac{1}{2}}\big ) < \mu_j \qquad \forall \, \lambda \leq \frac{1}{(4C)^4} \, .
\end{equation}
To see this, note first that $C\lambda R_j=\mu_j/2$ and that 
$$ C\lambda \log\big (\frac{1}{\mu_j \wedge \frac{1}{2}}\big ) < \frac{2C \lambda }{(\mu_j \wedge \frac{1}{2})^{1/2}} \leq 2C \lambda^{3/4} \leq \sqrt{\lambda}/2 \leq \mu_j/2 \, ,$$
where the first inequality holds since $\log(a) < 2\sqrt{a}$ when $a\geq 2$, the second inequality holds since $\mu_j \geq \sqrt{\lambda}$ (recall that $\aw(\S)\geq 1$) and the third holds since $\lambda \leq (4C)^{-4}$. 

We now define for each $j\geq 1$ the sets
$$ \good{j} = \{\ x \in \S \mid \mu_j \leq \mu_x(\lambda) \leq \mu_{j+1} \}  \, ,$$
and proceed to upper bound their volumes. If $x\in G_j$, then $\mu_x(\lambda)\geq \mu_j$ and so \eqref{eq:ineq} implies that the  conclusion of \cref{lem:x_with_large_weighted_balls_is_good} cannot hold. Therefore, $x\in G_j$ implies that both $\vol(\wball{x}{R_j}) \leq 32/\mu_j$ and that $\wball{x}{R_j} \neq \S$. Thus,  \cref{lem:smallball_has_small_volume} with $\eps =\frac{32}{\mu_j R_j} = \frac{32}{c4^j \aw(S)}$ and $R=R_j$ gives
\begin{equation}\label{eq:vol}
    \vol(G_j) \lesssim 4^{-j} \vol(\S) \, .
\end{equation} 

Let $N(\lambda)$ be the number of Laplacian eigenvalues in $(0,\lambda]$. Observe that
\begin{equation*}\label{eq:Nlambda}
N(\lambda)= \int_\S \mu_x(\lambda) ~dx  \, .
\end{equation*}
We upper bound this integral by partitioning $S$ to $\{x : \mu_x(\lambda)<\mu_1\}$ and the union of the $G_j$'s, so that 
\begin{equation}
    N(\lambda)  \leq \mu_1 \vol(\S) + \sum_{j \geq 1} \mu_{j+1} \vol(G_j) \lesssim \sum_{j \geq 0} 2^{-j} \sqrt{\lambda \aw(\S)} \vol(\S) \, ,
\end{equation}
where the last inequality is due to \eqref{eq:vol} and our choice of $\mu_j$. 

We conclude that there exists universal constants $B,b\in(0,\infty)$ such that 
\begin{equation*}
 N(\lambda)< B\sqrt{\lambda \aw(\S)}\vol(\S) \textrm{ for any } \lambda \in (0,b] \, . 
\end{equation*}
If $\lambda >0$ is such that $B\sqrt{\lambda \aw(\S)}\vol(\S)=k$ for some positive integer $k$, then $\lambda\leq b$ if and only if $k\leq B\sqrt{b \aw(\S)}\vol(\S)$. For such $\lambda$ we have $N(\lambda)<k$, hence $\lambda_k \geq \lambda$. We deduce that
$$ \lambda_k \geq \frac{k^2}{B^2 \aw(\S) \vol(\S)^2} \geq \frac{k^2}{(4\pi B)^2 \aw(\S) g^2} \, ,$$
for any positive integer $k\leq B\sqrt{b \aw(\S)}\vol(\S)$; the last inequality holds since  $\vol(S)=2\pi(2g-2)$. Since $I(S)\geq 1$ and $\vol(\S)\geq g$ we obtain the desired lower bound on $\lambda_k$ for all $k \leq  B\sqrt{b}g$; for all other integers $k \in (B\sqrt{b}g, 2g-3)$ we simply bound $\lambda_k \geq \lambda_{k_0}$ with $k_0=\lfloor B\sqrt{b}g \rfloor$. Since $k_0/k\gtrsim 1$ this concludes the proof.
 \qed

\subsection{Proof of the gradient upper bound} \label{sec:gradientUpper}
\begin{proof}[Proof of \cref{lem:gradient_bound}]
We follow the harmonic extension method of \cite[][Section 3]{ortega_pridhnani_gradient_bound}.
Let $\phi_0, \phi_1, \ldots, \phi_k$ be the orthonormal eigenvectors of the Laplacian with eigenvalue smaller than or equal to $\lambda$. Then $f_x$ can be written as the linear combination  $f_x(z) = \sum_{i=1}^k \beta_i \phi_i(z)$, with $\beta_i=\phi_i(x)/\sqrt{\mu_x(\lambda)}$. Let $h(z,t)$ be the harmonic extension of $f_x$ to the manifold $M = \S\times \reals$, i.e. 
\begin{equation*}
    h(z,t) = \sum_{i=1}^k \beta_i \phi_i(z)e^{\sqrt{\lambda_i} t} \, .
\end{equation*}
Observe that $\grad h(z,0)$ agrees with $\grad f_x$ on the coordinates corresponding to $\S$, and has an additional coordinate equal to $\sum_{i=1}^k \beta_i \phi_i(z) \sqrt{\lambda_i}$.  Thus, $|\grad f_x(z)|^2 \leq |\grad h(z,0)|^2$. Since $h$  is harmonic, by a classical theorem of Schoen and Yau (\cite[][Corollary 3.2, p21]{Schoen_Yau_Book} with $K=1$ and $a=1$, see also  (3.3) in \cite{ortega_pridhnani_gradient_bound}) we have
\begin{equation*}
    |\grad h(z,0)|^2 \leq C \sup \{h(y)^2 \mid y \in {B_M\left((z,0), 1 \right)}\} \, .
\end{equation*}
Suppose that the supremum is attained at the point $(z^*,t^*) \in B_M\left((z,0),1 \right)$. Since $h$ is harmonic, $h^2$ is subharmonic, and for all $r < \inj{(z^*,t^*), M}$ we have by \cite[][Theorem 6.2, p77]{Schoen_Yau_Book} that
\begin{equation}\label{eq:subharmonic_mean_value}
    h(z^*,t^*)^2 \leq \frac{1}{\vol\left(B_M\left((z^*, t^*),r \right) \right)} \int_{B_M\left((z^*, t^*),r \right) } h(y)^2 ~dM \, .
\end{equation}
 By the triangle inequality, the ball $B_M\left((z^*,t^*),r \right)$ contains the set $B_\S(z^*, r/2) \times [t^*-r/2, t^*+r/2]$, thus, its volume is at least $4\pi \sinh(r/2)^2 \cdot r/2 \gtrsim r^3$. Together with \eqref{eq:subharmonic_mean_value}, this gives
\begin{equation} \label{eq:gradient_by_subharmonicity}
    |\grad f_x(z)|^2 \lesssim \frac{1}{r^3}\int_{B_M\left((z^*, t^*),r \right) } h(y)^2 dy \, .
\end{equation}
Now take $r = \min\{1, \inj{(z^*,t^*), M}\} = \truncinj{z^*}$. By \cref{lem:weight_is_subexponential}, since $\mdist{z^*}{z}{\S} \leq 1$, we have $\truncinj{z}^{-1} \asymp \truncinj{z^*}^{-1}$, and so $1/r \asymp \truncinj{z}^{-1}$. To bound the integral, observe that $B_M\left((z^*, t^*),r \right) \subseteq \S \times [t^*-r,t^*+r]$.  We then have
\begin{align*}
\int_{B_M\left((z^*, t^*),r \right) } h(y)^2 ~dM &\leq \int_{t^*-r}^{t^*+r} \int_\S h(z,t)^2 ~dz dt \\
    &= \int_{t^*-r}^{t^*+r} \int_\S \left(\sum_{i=1}^k \beta_i \phi_i(z) e^{\sqrt{\lambda_i}t} \right)^2 ~dz dt \\
    &= \int_{t^*-r}^{t^*+r} \sum_{i=1}^k \beta_i^2 e^{2\sqrt{\lambda_i}t} dt \\
    &\leq 2r e^{4\sqrt{\lambda}} \sum_{i=1}^k \beta_i^2 = 2r e^{4\sqrt{\lambda}} \norm{f}_2^2 = 2re^{4\sqrt{\lambda}} \, ,
\end{align*}
where the last equality is due to \eqref{eq:norm_of_f}. Using \eqref{eq:gradient_by_subharmonicity} we deduce that
\begin{equation*}
    \abs{\grad f_x(z)} \lesssim e^{2\sqrt{\lambda}} r^{-1} \lesssim e^{2\sqrt{\lambda}} \truncinj{z}^{-1} \, 
\end{equation*}
as needed.
\end{proof}

\section{Heat kernel bound} \label{sec:heat_kernel_proof}
The goal of this section is to prove \cref{thm:main_heat_kernel}. 
Let $\gamma_1, \ldots, \gamma_s$ be the set of all simple closed geodesics of length $\leq 2\sinh^{-1}(1)$ and let $C(\gamma_i)$ be their collars, as described in \cref{lem:collar_lemma}. It will be convenient to chop off the parts of the collar at distance at most $1$ to its boundary, that is, we define $\truncated{C}(\gamma_i) := \{x \in C(\gamma_i) \mid \mathrm{dist}(x, \partial C(\gamma_i)) \geq 1\}$. 

Let $\thin{\S} := \union_{i} \{\truncated{C}(\gamma_i)\}$ and $\thick{\S} := \S \backslash \thin{\S}$. By \cref{lem:collar_lemma} we have that $\inj{x} \gtrsim 1$ for every $x\in \thick{\S}$ and $\inj{x} \lesssim 1$ for every $x\in \thin{\S}$.

\begin{proposition}\label{prop:counting_argument_by_injradius} 
For any $x\in \thick{\S}$ and any $t \in [1/2,1]$ we have 
\begin{equation*}
    \mheat{t}{x}{x}{\S} \lesssim \mheat{t}{0}{0}{\hyperbolic} + 1 \, .
\end{equation*}
\end{proposition}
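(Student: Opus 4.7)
My plan is to exploit the classical lift of the heat kernel to the universal cover. Let $\pi: \hyperbolic \to \S$ be the universal covering map and let $\Gamma \subset \mathrm{Isom}(\hyperbolic)$ be the deck transformation group so that $\S = \hyperbolic/\Gamma$. Fixing any lift $\tilde{x} \in \hyperbolic$ of $x$, the heat kernel admits the standard expansion
\begin{equation*}
    \mheat{t}{x}{x}{\S} \;=\; \sum_{\gamma \in \Gamma} \mheat{t}{\tilde{x}}{\gamma \tilde{x}}{\hyperbolic} \, .
\end{equation*}
The identity element contributes exactly $\mheat{t}{\tilde{x}}{\tilde{x}}{\hyperbolic} = \mheat{t}{0}{0}{\hyperbolic}$ by homogeneity of $\hyperbolic$, so it is enough to show that the sum over non-identity $\gamma$ is bounded by a universal constant uniformly in $t\in[1/2,1]$.

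The first step is a lattice-point counting estimate. Since $x \in \thick{\S}$, \cref{lem:collar_lemma} gives $\inj{x}\geq r_0$ for some universal $r_0>0$. By definition of the injectivity radius one has $d_{\hyperbolic}(\tilde{x}, \gamma \tilde{x}) \geq 2\inj{x} \geq 2r_0$ for every $\gamma \neq e$, and moreover the open balls $\{B_{\hyperbolic}(\gamma \tilde{x}, r_0)\}_{\gamma \in \Gamma}$ are pairwise disjoint. Since a hyperbolic disc of radius $r_0$ has volume bounded below by a positive universal constant, and since
\begin{equation*}
   \bigcup_{\gamma :\, d_{\hyperbolic}(\tilde{x}, \gamma \tilde{x})\leq R} B_{\hyperbolic}(\gamma \tilde{x}, r_0) \;\subseteq\; B_{\hyperbolic}(\tilde{x}, R+r_0)\, ,
\end{equation*}
whose volume is $\asymp e^{R}$, a standard volume-packing argument yields
\begin{equation*}
    \#\bigl\{\gamma \in \Gamma \,\mid\, d_{\hyperbolic}(\tilde{x}, \gamma \tilde{x}) \in [k,k+1]\bigr\} \;\lesssim\; e^{k} \qquad \text{for every integer } k\geq 0\, .
\end{equation*}

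The second step uses a classical off-diagonal upper bound on the hyperbolic heat kernel (Davies--Mandouvalos): for $t\in[1/2,1]$ and $d:=d_{\hyperbolic}(\tilde{x},z)\geq 1$ one has
\begin{equation*}
   \mheat{t}{\tilde{x}}{z}{\hyperbolic} \;\lesssim\; (1+d)^{3/2}\, e^{-d^2/(4t)\, -\, d/2}\, ,
\end{equation*}
which decays super-Gaussianly in $d$. Grouping the orbit of $\tilde{x}$ by dyadic annuli of the form $\{z : d_{\hyperbolic}(\tilde{x}, z)\in[k,k+1]\}$ for $k\geq \lfloor 2r_0 \rfloor$, combining the counting bound with the heat kernel bound, and summing, the contribution of all non-identity $\gamma$ is dominated by
\begin{equation*}
    \sum_{k\geq 0} e^{k}\cdot (1+k)^{3/2} e^{-k^2/4 - k/2} \, ,
\end{equation*}
which is a convergent series bounded by a universal constant. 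Adding back the identity term yields the claim.

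The only delicate point is invoking the correct hyperbolic heat-kernel upper bound with the right polynomial prefactor; once that is in hand, the remainder is routine counting. Because the Gaussian factor $e^{-k^2/(4t)}$ utterly defeats the exponential growth $e^k$ of the number of orbit points, the argument is robust and does not require any geometric information beyond $\inj{x}\gtrsim 1$, which is precisely what being in the thick part provides.
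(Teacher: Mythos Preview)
Your proof is correct and follows essentially the same approach as the paper: lift to $\hyperbolic$, separate the identity term, bound $\#\{\gamma: d_\hyperbolic(\tilde x,\gamma\tilde x)\in[k,k+1]\}\lesssim e^k$ by disjoint-ball volume packing using $\inj{x}\gtrsim 1$, and then kill the exponential growth with the Gaussian decay of $\mheat{t}{\cdot}{\cdot}{\hyperbolic}$. The only cosmetic difference is that the paper invokes the cruder bound $\mheat{t}{x}{y}{\hyperbolic}\lesssim t^{-1}e^{-d^2/(8t)}$ from Buser's book rather than the sharper Davies--Mandouvalos estimate you cite, but either suffices.
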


\begin{proof}
Recall that the heat kernel $\mheat{t}{x}{y}{\S}$ of any hyperbolic surface $\S$ is given by $\mheat{t}{x}{y}{\S} = \sum_{T \in \Gamma} \mheat{t}{\Tilde{x}}{T\Tilde{y}}{\hyperbolic}$, where $\Gamma$ is a group of isometries acting on $\hyperbolic$ so that $\S = \Gamma \backslash \hyperbolic$, and $\Tilde{x}$ and $\Tilde{y}$ are arbitrary inverse images of $x$ and $y$ of the projection to $\S$. To estimate $\mheat{t}{x}{x}{\S}$, we partition the group translations by distance of $T \tilde x$ from $\tilde x$.  Denoting
\begin{equation*}
    \Gamma(m) := \{T \in \Gamma \mid m < d_\hyperbolic(\tilde x, T \tilde x) \leq m+1 \} \, ,
\end{equation*} 
we have
\begin{align}
   \mheat{t}{x}{x}{\S}&= \mheat{t}{0}{0}{\hyperbolic} + \sum_{m=0}^{\infty} \sum_{T \in \Gamma(m)} \mheat{t}{\tilde x}{T\tilde x}{\hyperbolic} \, .
   \label{eq:partition_of_the_count}     
\end{align}
We bound $|\Gamma(m)|$ by an explicit version of \cite[][Lemma 7.5.3]{buser_book}. Let $D$ be the disc around $\tilde x$ of radius $\inj{x}$. Every disc $T(D)$ with $T\in \Gamma(m)$ is contained in the disc of radius $m+1+\inj{x}$ around $\tilde x$ in $\hyperbolic$ which has volume $2\pi \left( \cosh(m+1+\inj{x})-1 \right) \leq 2\pi e^{m+1+\inj{x}}$. On the other hand, the volume of $T(D)$ is $2\pi \left( \cosh(\inj{x})-1 \right)$ and $\{T(D)\}_{T\in \Gamma(m)}$ are non-intersecting. Thus,
\begin{equation}\label{eq:counting_group_actions}
    \#\Gamma(m) \leq \frac{e^{\inj{x}+1}}{\cosh(\inj{x})-1} e^m \lesssim \left(1+\frac{1}{\inj{x}^2} \right) e^m \lesssim e^m \, .
\end{equation}
where in the second inequality we relied on the asymptotics of the function $\cosh(r)$ for $r \to 0$ and $r \to \infty$, and in the third inequality we relied on our assumption that $x\in \thick{\S}$. Next, a well-known bound on $p_t^{\hyperbolic}$ (see \cite[][Lemma 7.4.26]{buser_book}) states that  
\begin{equation} \label{eq:simple_hyperbolic_heat_kernel_bound}
    \mheat{t}{x}{y}{\hyperbolic} \lesssim \frac{1}{t}\exp\left(-{\frac{{d_\hyperbolic(x,y)}^2}{8t}}\right) \, .
\end{equation}
We use this to bound the sum on the right hand side of \eqref{eq:partition_of_the_count} by
\begin{equation*}
    \sum_{m=1}^{\infty} \sum_{T \in \Gamma(m)} \mheat{t}{\tilde x}{T \tilde x}{\hyperbolic} \lesssim \frac{1}{t} \sum_{m=1}^\infty \exp\left( m - m^2 / 8t \right) \lesssim 1 \, ,
\end{equation*}
since the term over $m$ is convergent. For the sum over $T \in \Gamma(0)$ we note that $d_{\hyperbolic}(\tilde x,T\tilde x) \geq \inj{x} \gtrsim 1$ for every $T \in \Gamma(0)$ so by \eqref{eq:simple_hyperbolic_heat_kernel_bound} we obtain a contribution of another constant. 
\end{proof}

We next bound the behavior of $\mheat{t}{x}{x}{\S}$ in the thin part.

\begin{proposition}\label{prop:heat_kernel_trace_depends_on_local_geometry} 
    Let $\gamma$ be a simple closed geodesic of length $\ell \leq 2\sinh^{-1}(1)$. Then for any $t \in [1/2,1]$ we have     \begin{equation}\label{eq:heat_kernel_trace_depends_on_local_geometry}
        \int_{\truncated{C}(\gamma)} \mheat{t}{x}{x}{\S} ~dx \lesssim  \log(1/\ell) \, .
    \end{equation}
\end{proposition}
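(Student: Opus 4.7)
The plan is to lift to the universal cover $\hyperbolic$ and expand $\mheat{t}{x}{x}{\S} = \sum_{T \in \Gamma} \mheat{t}{\tilde{x}}{T\tilde{x}}{\hyperbolic}$, as in \cref{prop:counting_argument_by_injradius}. Let $\tau \in \Gamma$ be a hyperbolic translation whose axis in $\hyperbolic$ projects to $\gamma$, and let $\Sigma \subset \hyperbolic$ be the lift of the collar $C(\gamma)$ stabilized by $\langle \tau\rangle$. Fix a lift $\tilde{x} \in \Sigma$ of $x$. This decomposes the deck sum into a cyclic part $\sum_{n\in\mathbb{Z}} \mheat{t}{\tilde{x}}{\tau^n\tilde{x}}{\hyperbolic}$ and a non-cyclic part $\sum_{T \in \Gamma \setminus \langle\tau\rangle} \mheat{t}{\tilde{x}}{T\tilde{x}}{\hyperbolic}$; heuristically, these represent heat that wraps around the short geodesic $\gamma$ and heat that exits the collar and returns.

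For the cyclic contribution, which will produce the $\log(1/\ell)$ term, the proof of \cref{prop:injradius_estimate} gives $d_n := \mdist{\tilde{x}}{\tau^n\tilde{x}}{\hyperbolic} = \cosh^{-1}\bigl(1 + (\cosh(n\ell)-1)\cosh(\rho)^2\bigr)$, where $\rho = \mdist{x}{\gamma}{\S}$. The Gaussian bound \eqref{eq:simple_hyperbolic_heat_kernel_bound}, valid for $t \in [1/2,1]$, yields $\sum_n \exp(-d_n^2/8) \asymp 1/(\ell\cosh(\rho))$: the $\asymp 1/(\ell\cosh\rho)$ indices with $|n|\ell\cosh(\rho) \lesssim 1$ contribute $\Theta(1)$ each, and the tail decays rapidly. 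Integrating against the cylindrical area element $\ell\cosh(\rho)\,d\rho\,d\theta$ on the truncated collar (where $\rho$ ranges over $[-(W(\gamma)-1), W(\gamma)-1]$) gives
\begin{equation*}
   \int_{\truncated{C}(\gamma)} \frac{1}{\ell\cosh(\rho)} ~dx \asymp \int_{-(W(\gamma)-1)}^{W(\gamma)-1} d\rho \asymp W(\gamma) \asymp \log(1/\ell).
\end{equation*}

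For the non-cyclic part, the key geometric point is that for $T \in \Gamma \setminus \langle\tau\rangle$, $T\tilde{x}$ lies in a distinct lift strip $T\Sigma$, which is disjoint from $\Sigma$. Any geodesic from $\tilde{x}$ to $T\tilde{x}$ must exit $\Sigma$ (costing $\geq \dist{\tilde{x}}{\partial\Sigma} \geq 1$ since $x \in \truncated{C}(\gamma)$) and later enter $T\Sigma$ (costing $\geq 1$), so $\mdist{\tilde{x}}{T\tilde{x}}{\hyperbolic} \geq 2$. Applying this to $T_1^{-1}T_2$ shows that balls $B(T_j\tilde{x}, 1/2)$ around representatives of distinct cosets of $\langle\tau\rangle$ in $\Gamma$ are pairwise disjoint, so a standard volume packing gives $\lesssim e^m$ cosets with a representative within distance $m$ of $\tilde{x}$. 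Within a single coset, since $T\tilde{x}$ lies outside $\Sigma$ its Fermi distance from the axis of $\tau$ exceeds $W(\gamma)$, so using $\cosh(W(\gamma)) \asymp 1/\ell$ the spacing between successive iterates $\tau^n T\tilde{x}$ along the orbit is $\gtrsim 1$; combining these yields $\#\{T \in \Gamma \setminus \langle\tau\rangle : \mdist{\tilde{x}}{T\tilde{x}}{\hyperbolic} \in [m,m+1]\} \lesssim m\,e^m$. Summing with Gaussian decay gives $\sum_{T \notin \langle\tau\rangle} \mheat{t}{\tilde{x}}{T\tilde{x}}{\hyperbolic} \lesssim \sum_{m\geq 2} m\,e^{m - m^2/8} \lesssim 1$ per point, and integrating against $\vol(\truncated{C}(\gamma)) \asymp 1$ gives an $O(1)$ contribution, which combined with the cyclic bound proves \eqref{eq:heat_kernel_trace_depends_on_local_geometry}.

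The principal obstacle is this non-cyclic counting: the bound from \cref{prop:counting_argument_by_injradius} carries a factor $\inj{x}^{-2} \asymp 1/(\ell\cosh\rho)^2$ that is useless in a thin collar and would naively integrate to a divergent $1/\ell$-type contribution. Exploiting the coset structure of $\langle\tau\rangle \leq \Gamma$ to produce balls of universal size $\Theta(1)$ in the packing argument, rather than balls of radius $\inj{x}$, is what replaces the $\inj{x}^{-2}$ penalty by a mild polynomial-in-$m$ factor from within-coset iterates, which the Gaussian tail absorbs.
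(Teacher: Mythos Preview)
Your approach is correct and genuinely different from the paper's. The paper invokes the Li--Yau on-diagonal bound $\mheat{t}{x}{x}{\S}\lesssim \vol(B(x,\sqrt t))^{-1}$, observes that for $x$ in a thin collar one has $\vol(B(x,\sqrt t))\gtrsim \inj{x}\asymp \ell\cosh\rho$, and integrates $1/(\ell\cosh\rho)$ against the cylindrical area element to obtain $W(\gamma)\asymp\log(1/\ell)$. Your cyclic sum $\sum_n \mheat{t}{\tilde x}{\tau^n\tilde x}{\hyperbolic}$ recovers exactly this same pointwise bound $\lesssim 1/(\ell\cosh\rho)$ by hand, and your non-cyclic analysis substitutes for the remainder of what Li--Yau would give. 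The paper's route is much shorter (Li--Yau does the heavy lifting as a black box), while yours is more self-contained, using only the hyperbolic Gaussian bound \eqref{eq:simple_hyperbolic_heat_kernel_bound} already quoted in the paper and the explicit collar geometry; it also makes transparent that the $\log(1/\ell)$ comes entirely from heat winding around $\gamma$.

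One small slip in the non-cyclic count: your two observations --- representatives of distinct $\langle\tau\rangle$-cosets are $\geq 2$ apart, and successive $\tau$-iterates of any $T\tilde x\notin\Sigma$ are $\gtrsim 1$ apart --- together say that \emph{all} translates $T\tilde x$ with $T\in\Gamma\setminus\langle\tau\rangle$ are pairwise at distance $\gtrsim 1$. A single volume-packing then gives $\lesssim e^m$ such translates in the shell $[m,m+1]$ directly. Your stated route, ``$\lesssim e^m$ cosets times $\lesssim m$ iterates per coset'', does not quite work as written: the $\tau$-orbit of $T\tilde x$ lies on an equidistant curve, not a geodesic, and unit spacing along such a curve does not by itself limit the number of orbit points in a hyperbolic ball of radius $m$ to $O(m)$. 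The fix is the one-line packing just described, which uses the same ingredients and even drops the superfluous factor $m$.
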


\begin{proof} We apply the classical Li and Yau theorem (see Corollary 3.1 in \cite{li_yau_main} with $\alpha=3/2$ and $\varepsilon=1/2$) which asserts that if $\S$ is a complete Riemannian manifold without boundary and with Ricci curvature bounded from below by $-K$, then 
\begin{equation} \label{eq:li_yau}
    \mheat{t}{x}{y}{\S} \lesssim \vol(B_\S(x,\sqrt{t}))^{-1/2}\vol(B_\S(y,\sqrt{t}))^{-1/2}\exp\left(C Kt-\frac{d_{\S}(x,y)^2}{4.5t} \right) \, ,
\end{equation}
where $C<\infty$ is a universal constant. Using the above bound yields 
\begin{equation}
    \mheat{t}{x}{x}{\S} \lesssim \frac{1}{\vol(B_\S(x,\sqrt{t}))} \, , \label{eq:beginning_of_li_yau}
\end{equation}
as long as $t\leq 1$. Let $x\in \truncated{C}(\gamma)$ have Fermi coordinates $(\rho, \theta)$ relative to $\gamma$, where $\rho \geq 0$ without loss of generality. The ball $B-\S(x,\sqrt{t})$ is contained in the collar $C(\gamma)$. Hence, if we let $\eta$ be the unit-speed geodesic given by $\eta(s) = \varphi_\gamma(\rho +s, \theta)$ with $s\in[0,\sqrt{t}/2]$ (that is, $\eta$  extends outwards from $x$, is perpendicular to $\gamma$ and has length $\sqrt{t}/2$), then $B_\S(x, \sqrt{t})$ contains the tubular neighborhood of $\eta$ given by $\varphi_\eta([0, \sqrt{t}/2] \times[0, r])$ where $r\asymp \inj{x}$ (which is possible since $\inj{x} \lesssim 1$). We thus obtain that $\vol(B_\S(x,\sqrt{t})) \gtrsim \inj{x}$. Plugging this volume estimate into the Li-Yau bound \eqref{eq:beginning_of_li_yau}, by \cref{prop:injradius_estimate} we thus obtain 
 \begin{align*}
    \int_{\truncated{C}(\gamma)} \mheat{t}{x}{x}{\S} ~dx  &\lesssim \int_0^{1} \int_{0}^{W(\ell)} \frac{1}{\inj{x}} \ell \cosh(\rho) ~d\rho d\theta \lesssim W(\ell) \lesssim \log(1/\ell) \, .
\end{align*}
\end{proof}

\subsection{Proof of \texorpdfstring{\cref{thm:main_heat_kernel}}{}}
It is well known that $p_t(x,x)\geq 1/\vol(S)$ so the expression in the absolute value is always non-negative. By the spectral theorem \cite[][Section VI.1, Sturm-Liouville decomposition]{chavel_book}, 
\begin{equation}\label{eq:convergence_to_uniform_basic}
    \int_{\S} \Big[ \mheat{t}{x}{x}{\S} - \frac{1}{\vol(\S)} \Big] ~dx = \sum_{k=1}^{\infty}e^{-t \lambda_k} =  \sum_{k=1}^{2g-3}e^{-t \lambda_k} + \sum_{k=2g-2}^{\infty}e^{-t \lambda_k} \, . 
\end{equation}
Applying \cref{thm:main_eigenvalue_lower_bound} and recalling that $g\asymp \vol(\S)$ we bound the first sum on the right-hand side by
\begin{equation}\label{eq:convergence_to_uniform_part_1}
    \sum_{k=1}^{2g-3}e^{-t \lambda_k} \leq \sum_{k=1}^{\infty}e^{-ct \frac{k^2}{\aw(\S)\vol(\S)^2}} \lesssim \int_0^{\infty} \exp \left(-ct \frac{x^2}{\aw(\S)\vol(\S)^2} \right)dx \asymp \vol(\S) \sqrt{\frac{\aw(\S)}{t}} \, .
\end{equation}    
For the second sum on the right-hand side, a result by Otal and Rosas \cite[][Theorem 1]{otal_rosas_eigenvalues_must_be_large} states that $\lambda_{2g-2} > \frac{1}{4}$. Denoting $t = \tau + 1/2$ for $\tau \geq 1/2$, we get
\begin{equation*}
    \sum_{k=2g-2}^\infty e^{-t \lambda_k} = \sum_{k=2g-2}^\infty e^{ -(\tau+1/2) \lambda_k} \leq e^{-\tau/4} \sum_{k=0}^{\infty}e^{-\frac{1}{2} \lambda_k} \lesssim e^{-t/4} \int_{\S} \mheat{1/2}{x}{x}{\S} ~dx \, .
\end{equation*}

Let $\gamma_1, \ldots, \gamma_s$ be the set of all simple closed geodesics of length $\leq 2\sinh^{-1}(1)$ in $\S$. We appeal to  \cref{prop:counting_argument_by_injradius} and \cref{prop:heat_kernel_trace_depends_on_local_geometry} with $t = 1/2$; since $\mheat{1/2}{0}{0}{\hyperbolic}\asymp 1$, we get
\begin{align*}
    \int_{\S} \mheat{1/2}{x}{x}{\S} ~dx &\lesssim \vol(\S) + \sum_{i=1}^{s}\log \left(\frac{1}{\ell(\gamma_i)} \right) \lesssim \vol(\S) \sqrt{\aw(\S)} \, ,    
\end{align*}
since $\sum_{i=1}^{s}\log(1/\ell(\gamma_i))\lesssim\sum_{i=1}^{s} \ell(\gamma_i)^{-1/2}$ which by Cauchy-Schwartz is at most $\sqrt{s\sum_{i=1}^{s} \ell(\gamma_i)^{-1}}$ and we get the above inequality using \cref{cor:geodesic_integral_correspondence}. All this gives that
\begin{equation*}
    \sum_{k=2g-2}^{\infty}e^{-t \lambda_k} \lesssim e^{-t/4} \vol(\S) \sqrt{\aw(\S)} \, ,
\end{equation*}
which together with \eqref{eq:convergence_to_uniform_basic} and \eqref{eq:convergence_to_uniform_part_1} concludes the proof.    \qed

\begin{remark}
The choice of $t=1$ in \cref{thm:main_heat_kernel} is arbitrary since it is the large values of $t$ that are interesting. The same proof shows that for any $t_0>0$ there exists $C(t_0)<\infty$ such that \eqref{eq:heatkernelBound} holds for all $t\geq t_0$ and $C=C(t_0)$.
\end{remark}

\section{Sharpness} \label{sec:sharpness}
Let $I\geq 1$ be real and $g \geq 2$ be an integer.  Set $\eps = \min \{\frac{1}{I}, \sinh^{-1}(1)\}$ and $n=g-1$. Let $Y$ be a pair of pants with boundary geodesics of lengths $(1,1,\eps)$, and let $X$ be the surface obtained by gluing 
two identical copies of $Y$ along their two boundary geodesics of length $1$, without twists. Note that $X$ has two boundary components, each of length $\eps$. Finally, let $\S$ be the surface of genus $g$ obtained by gluing together without twists $n$ copies of $X$ in a cycle. See \cref{fig:cycle_surface}.

\begin{figure}[!ht]
    \centering
    \includegraphics[width=0.3   \textwidth]{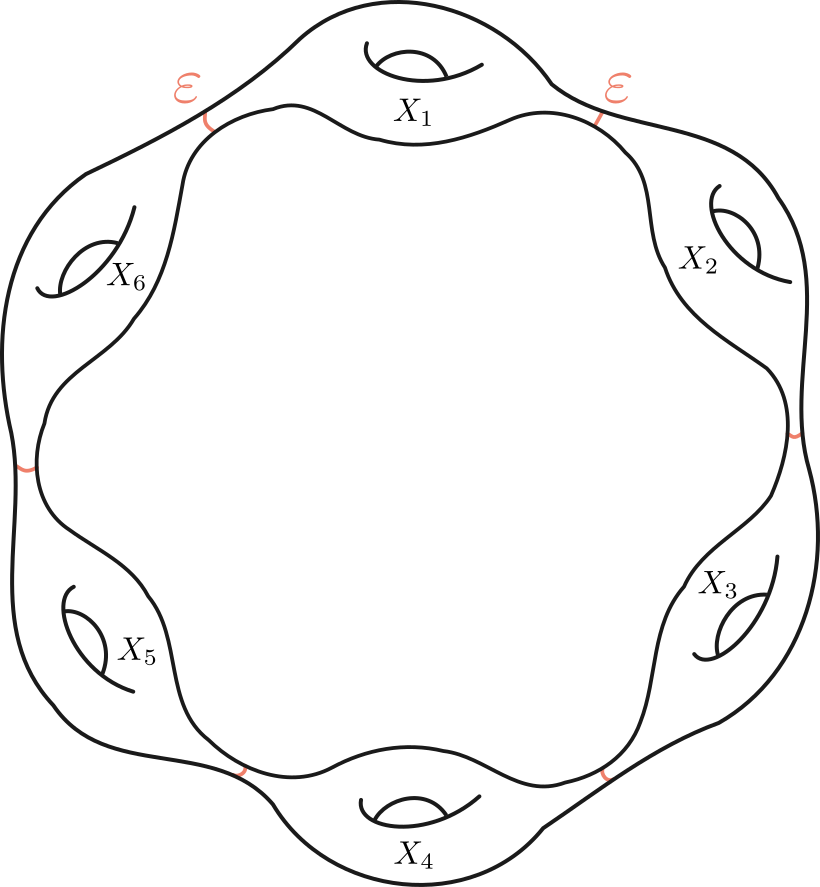}
    \caption{A schematic of $S$ for $n=6$.} 
    \label{fig:cycle_surface}
\end{figure}

By \cref{cor:geodesic_integral_correspondence}, $\aw(\S) \asymp 1/\eps$, and so $\aw(\S) \gtrsim I$. We claim that the eigenvalues of the Laplacian on $\S$ satisfy 
\begin{equation*}
  \lambda_k\leq \frac{C \eps k^2}{g^2} \, ,  
\end{equation*}
where $k\in\{1,\ldots, n \}$ and $C<\infty$ is a universal constant. We will do this by constructing appropriate test functions for the minimax principle.

Let $k \leq n$ be fixed. Consider  $k$ disjoint connected subsets $\S_1, \ldots, \S_k$ of $\S$, where each $\S_i$ is a concatenation of $\floor{\frac{n}{k}}$ consecutive copies of X. We assume for simplicity that $\floor{\frac{n}{k}}$ is odd and write $\floor{\frac{n}{k}} = 2m+1$; the even case is handled similarly. For each $i$ we define a function $f_i(x)$ whose support is $\S_i$, so that the functions $\{f_i\}_{i=1}^k$ have disjoint supports. Let us describe $f_1$; the other functions are constructed similarly. Assume that $S_1$ is the concatenation of $X_1,\ldots, X_{2m+1}$. If $x \in X_j$ for some $j\in\{1,\ldots, 2m+1\}$ and $\dist{x}{\partial X_j} \geq 1$, then we set $f_1(x) = \frac{j}{m+1}$ if $ j \leq m+1$ and $f_1(x) = 1 - \frac{j-(m+1)}{m+1}$ if $ j > m+1$. When $\dist{x}{\partial X_j} < 1$, we set the values of $f_i$ so that they interpolate linearly between the two values in the boundary $\{x : \dist{x}{\partial X_j} = 1\}$. 

It is straightforward to verify that $\norm{f_i}_2 \asymp \sqrt{m}$ and $\int_\S \abs{\grad f_i}^2 ~d\S \lesssim \eps/m $. The functions $\{f_i\}$ are not smooth, but one can easily make them smooth (say, by mollifying, as we did in \eqref{def:f}) and still have the same bounds on the norm and the Dirichlet energy as above. Thus, the minimax principle \cite[][Theorem 8.2.1 (i)]{buser_book} and the fact that $m \asymp n/k$ gives that $\lambda_{k}\lesssim \eps k^{2}/n^{2}$. This is the required bound for $k\in \{1,\ldots g-1\}$ and it implies the bound for $g\leq k\leq 2g-3$ by properly adjusting the constant $C$.

Lastly, this upper bound on $\lambda_k$ for $k\in\{1,\ldots,n\}$ immediately shows that the reverse inequality in \cref{thm:main_heat_kernel} holds. Indeed, we plug in this upper bound for the first $n$ eigenvalues in \eqref{eq:convergence_to_uniform_basic} and all other terms we bound below by $0$.

\section*{Acknowledgements} The authors are supported by ERC consolidator grant 101001124 (UniversalMap) as well as ISF grants 1294/19 and 898/23. We thank Rotem Assouline for his help with the proof of \cref{prop:change_of_metric}; Boaz Klartag, Ze'ev Rudnick and Perla Sousi for useful discussions; and Yuhao Xue for pointing out errors in an earlier version.

\printbibliography

\end{document}